        \newtheorem{thm}{Theorem}[section]
        \newtheorem{lma}{Lemma}[section]
        \newtheorem{prop}{Proposition}[section]
        \newtheorem{cor}{Corollary}[section]
\newenvironment{proof}{\medskip\noindent{\bf Proof.}}{\medskip}
	\theoremstyle{definition}
        \newtheorem{dfn}{Definition}[section]
	\newtheorem{rmk}{Remark}[section]
        \newcommand{\ms}{\medskip}
\numberwithin{equation}{section}
	\newcommand{\R}{{\mathbb R}}
	\newcommand{\N}{{\mathbb N}}
	\renewcommand{\o}{{\omega}}
	\renewcommand{\O}{{\mathcal O}}
	\newcommand{\D}{{\mathcal D}}
	\newcommand{\U}{{\mathcal U}}
	\renewcommand{\ss}{\subset}
	\newcommand{\sse}{\subseteq}
	\newcommand{\bs}{\backslash}
	\newcommand{\wt}{\widetilde}
	\newcommand{\wh}{\widehat}
	\newcommand{\ol}{\overline}
	\newcommand{\s}{S_i^{(l)}}
\newcommand{\cU}{\mathcal U}
\newcommand{\cO}{\mathcal O}
\newcommand{\om}{\omega}
	\newcommand{\e}[1]{\emph{#1}}
	\renewcommand{\div}{\,\mathrm{div}\,}
\begin{document}
\title{Square Functions and the $A_\infty$ Property of Elliptic Measures}
\author{C. Kenig\footnote{C.K. was partially supported by NSF grants DMS-0968472 and DMS-1265249.}
, B. Kirchheim, J. Pipher \& T. Toro\footnote{T.T. was partially supported by an NSF grant DMS-0856687, the Simons Foundation grant  228118 and the Robert R. \& Elaine F. Phelps Professorship in Mathematics.}}
\maketitle

\renewcommand{\thepage}{\arabic{page}}

\section{Introduction}

In this paper, we provide a new means of establishing solvability of the Dirichlet problem on Lipschitz domains, with measurable data, for second order elliptic, non-symmetric divergence form operators. 
We will show that a certain optimal Carleson measure estimate for bounded solutions of such operators implies a regularity result for the associated elliptic measure. 

\ms

We consider divergence form elliptic operators $L$ in $\R^n_+$, where $L=\div A(X)\nabla$, and $A(X)=\left( a_{ij}(X) \right)_{i,j=1}^n$ is a real $n\times n$ matrix with $a_{ij}\in L^\infty$ satisfying the uniform ellipticity condition:  there exists $ \lambda>0$  such that  for all $\xi\in \R^n$, one has
    \begin{equation}\label{ellip}
        \lambda\lvert \xi\rvert^2 \leq \langle A(X)\xi, \xi\rangle \leq \lambda^{-1}\lvert \xi\rvert^2.
    \end{equation}

The matrix $A$ is not be assumed to be symmetric. For future reference, when we say that a bound depends only on {\emph{the ellipticity}}, we will mean that it depends only on $\lambda$ and $\sup_{i,j}\lVert a_{ij}\rVert_{L^\infty}$.

\ms

We begin by briefly recalling of some of the properties of solutions to such operators, and refer the reader to previous literature for the details. In particular, we will use the definitions and results in Section 1 of \cite{KKPT}.
Since the coefficients of $L$ are merely bounded and measurable, solutions to $Lu = 0$ are initially defined in a weak sense. However, by the fundamental work of De Giorgi, Nash and Moser, weak solutions are Holder continuous in the interior of some order that depends only on the ellipticity of the operator, and positive solutions satisfy a Harnack principle. 
The results of Littman, Stampacchia, and Weinberger 
\cite{LSW} are also
valid in the non-symmetric setting. In particular, a Lipschitz domain $\Omega$ is
regular for the Dirichlet problem. That is, let $u_g$ denote the weak solution of $Lu_g=0$ in $\Omega$ with $u=g$, for $g$ continuous on $\partial\Omega$. Then the map $g \rightarrow u_g(X)$ is a positive bounded linear functional which in turn is represented by a probability measure $\o^X$.

Returning for the moment to the setting of the upper half space, we see that
for any bounded Borel measurable function $F$ on $\R^{n-1}$, one can uniquely solve the Dirichlet problem (by integration against the elliptic measure). That is if
\begin{equation*}
        \tag{DP}
        \label{eqn.dp}
        \left\{
        \begin{aligned}
            Lu&= 0 \text{  in  }\R_+^n\\
            u\vert_{\partial \R_+^n}&= F\text{  on  }\partial\R_+^n
        \end{aligned}
        \right.
\end{equation*}
then for $X\in\R^n_+$
\begin{equation}\label{sol-dp}
u(X)=\int_{\partial\R_+^n} F\, d\o^X,
\end{equation}
where $\o^X$ is the elliptic measure with pole at $X$.

\ms

By Harnack's principle, this family of measures is mutually absolutely continuous. 
We will be concerned with the further regularity properites of the measures $\o^X$, such as mutual absolute continuity with respect to Lebesgue measure on $\partial\R_+^n$.  The solvability of the Dirichlet problem for $L$ with data in $L^p(dx)$ is
characterized by means of a precise relationship between these elliptic measures associated to $L$ and Lebesgue measure.  These relationships quantify absolute continuity and are expressed in terms of the Muckenhoupt weight classes, $A_{\infty}$ or $A_p$. In this paper, the focus is on a new characterization of the property that $\o^X $ belongs to $A_{\infty}(dx)$ in terms of a Carleson measure property of bounded solutions. 

\ms

In order to describe this more concretely, let us recall some definitions.

\begin{dfn}\label{dfn.cm}
A measure $\o$ defined on $\R^{n-1}$ belongs to the weight class $A_{\infty}(dx)$ if any of the following equivalent conditions hold:
   \begin{enumerate}
        \item [(i)]  For every $\varepsilon\in (0,1)$ there exists $\delta\in(0,1)$ such that for any 
      cube $Q\subset \R^{n-1}$ and $E\subset Q$ with 
     \begin{equation} \label{eqn.1.1}
\frac{\o(E)}{\o(Q)}<\delta   \hbox{    then   }  \frac{|E|}{|Q|}<\varepsilon.
          \end{equation}
        \item [(ii)] For every $\varepsilon\in (0,1)$ there exists $\delta\in(0,1)$ such that for any 
      cube $Q\subset \R^{n-1}$ and $E\subset Q$ with 
     \begin{equation} \label{eqn.2.1}
\frac{|E|}{|Q|}<\delta   \hbox{    then   }  \frac{\o(E)}{\o(Q)}<\varepsilon.
          \end{equation}
        \item [(iii)] there exists a $p > 1$ such that $\o$ belongs to $A_p(dx)$.
    \end{enumerate}
\end{dfn}

\begin{dfn} 
The measure $\mu$ is a Carleson measure in the upper half space $\R_+^n$ if there exists a 
constant $C$ such for all cubes $Q \subset \partial\R_+^n$, 
$\mu(T(Q)) < C|Q|$, where $|Q|$ denotes the Lebesgue measure of the cube $Q$, and
$T(Q)=\left\{ X=(x,t), x\in S, 0\leq t\leq \ell(Q) \right\}$.
\end{dfn}

In \cite{F}, C. Fefferman discovered a property of harmonic functions, which has proven to be a powerful tool in analysis and potential theory. Namely, if $u(x,t)$ is the Poisson extension of $f \in BMO$, then 
$d\mu = t|\nabla u|^2 dxdt$ is a Carleson measure in the upper half space $\R_+^n$. 
The converse also holds for functions that are not too large at $\infty$. 
This fact has been generalized in a variety of ways: for harmonic functions on Lipschitz domains (\cite{FN}), and for more general second order elliptic operators whose elliptic measure has some regularity with respect with to the boundary Lebesgue measure (\cite{DKP}). 
The main result in \cite{DKP} established the equivalence between such a Carleson measure property of a solution to $Lu = 0$ with boundary data in $BMO$ and the $A_{\infty}$ property of the elliptic measure. In this paper, we show that this equivalence remains true when the data is merely assumed to be bounded and the Carleson measure is estimated by the $L^{\infty}$ norm instead of the (smaller) $BMO$ norm of the data. See
the statement in Corollary \ref{cor.1.13}.

\ms

The strategy of the present paper is modeled on, and extends, a geometric construction developed in \cite{KKPT}, where boundary value problems for
 non-symmetric, real divergence form operators were first systematically studied. In that paper, a closely related Carleson property of bounded solutions was shown to be equivalent to the $A_{\infty}$ property of elliptic measure. Roughly speaking, that property involved an $L^1$ version of the Carleson condition for gradients of approximants to bounded solutions. Such a  Carleson condition on approximants had appeared earlier in the literature in connection with $H^1 - BMO$ duality and the Corona Theorem. (\cite{G}, Chapter VIII.)  The key to proving (\ref{eqn.1.1}) in Definition \ref{dfn.cm} lay in constructing a function whose oscillation was large on a set of small $\o$-measure, and deriving a pointwise $L^1$ lower bound on the gradient of the solution with this oscillating data. (By contrast, the strategy in \cite{DKP} was to prove $A_{\infty}$ through  (\ref{eqn.2.1}) by constructing a function with small $BMO$ norm on a set of small Lebesgue measure.) A main contribution of the present paper is the construction, in Section 2,  of an oscillating data function on a set of small elliptic measure for which the C. Fefferman  $L^2$-type Carleson condition on gradients of solutions is large.

\ms

The following question for dyadic martingales in $[0,1]$ provided us with a model problem: given a set $E\subset [0,1]$
of measure zero, find a bounded dyadic martingale which is infinite on $E$. We provide a positive (quantitative) answer to this 
question in Section \ref{sec.0}, which then led us to the desired construction.

\ms

The Carleson measure conditions we consider here are essentially localized integrals of square functions. The square function, defined in Section 3 below, has played a substantial role in solvability of boundary value problems since its appearance in classical complex function theory, where it is referred to as an ``area integral".  In the classical setting of harmonic functions in Euclidean spaces, the square function and the non-tangential maximal function have equivalent $L^p$ norms for any $p > 0$ (\cite{S}). 
In the more general setting of solutions to second order elliptic operators $L$, the comparability of $L^p$ norm bounds (on all subdomains) is in fact equivalent to the $A_{\infty}$ property of the elliptic measure associated to $L$ (\cite{KKPT}). As a result of the construction in this paper, this last statement can be refined: $A_{\infty}$ follows from a one sided norm estimate of the square function by the non-tangential maximal function on the boundary (Theorem \ref{thm.1.16}).

\ms

Square function estimates in the upper half space are dealt with in Section 3, following the geometric constructions leading to the data function with a large oscillation. Section 4 contains the generalizations to Lipschitz domains.

\section{Functions with large oscillations on small sets}\label{sec.0}

\vskip .2in
\begin{lma}\label{lma.0.1}
    Let $\o$ be a doubling measure supported in all of $\R^n$, then $\o(\partial Q)=0$ for any dyadic cube $Q$.
\end{lma}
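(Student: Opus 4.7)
The plan is to reduce the lemma to showing that $\omega(H)=0$ on bounded sets for every affine hyperplane $H\subset\R^n$. This suffices because $\partial Q$ is bounded and is contained in the union of the $2n$ hyperplanes carrying its codimension-one faces (the lower-dimensional faces already lie inside these). By a translation and rotation I may assume $H=\{x_n=0\}$.

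The key step is a uniform doubling bound: for every $x\in H$ and every $r>0$,
\[
\omega\bigl(B_r(x)\cap H\bigr)\le \Bigl(1-\frac{2}{C_d^{2}}\Bigr)\,\omega\bigl(B_r(x)\bigr),
\]
where $C_d$ is the doubling constant of $\omega$. To prove this I would introduce the shifted centers $x^{\pm}=x\pm(r/2)e_n$ and verify the elementary inclusions
\[
B_{r/2}(x^{+})\subset B_r(x)\cap\{y_n>0\}\quad\text{and}\quad B_r(x)\subset B_{2r}(x^{+}),
\]
together with their mirror counterparts for $x^{-}$. Two applications of the doubling hypothesis then yield $\omega(B_{r/2}(x^{+}))\ge C_d^{-2}\omega(B_{2r}(x^{+}))\ge C_d^{-2}\omega(B_r(x))$, so the upper half-space contribution is bounded from below by $C_d^{-2}\omega(B_r(x))$; symmetrically for the lower half-space. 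Subtracting both from $\omega(B_r(x))$ yields the displayed inequality.

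With this uniform bound in hand, I would argue by contradiction: suppose $\omega(H\cap F)>0$ for some bounded Borel $F\subset H$. Invoking the Lebesgue differentiation theorem for doubling measures on $\R^n$, $\omega$-almost every point $x$ of $F$ is a point of density, so $\omega(F\cap B_r(x))/\omega(B_r(x))\to 1$ as $r\to 0^{+}$. Since $F\subset H$, this forces $\omega(H\cap B_r(x))/\omega(B_r(x))\to 1$, contradicting the strict upper bound $1-2/C_d^{2}<1$ established above. Hence $\omega(H\cap F)=0$ for every bounded $F\subset H$, and summing over the finitely many faces of $Q$ gives $\omega(\partial Q)=0$.

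The main obstacle is merely to get the geometric inclusions right, so that the two-step doubling chain cleanly compares a small off-$H$ ball to the full ball centered on $H$. Everything else --- the reduction to one hyperplane at a time and the appeal to Lebesgue differentiation for locally finite doubling measures --- is standard bookkeeping.
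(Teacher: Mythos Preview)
Your argument is correct, but it differs from the paper's. The paper gives a direct covering argument for the cube $Q_0$: it observes that the open shell $\mathcal{O}_0\setminus[\eta,1-\eta]^n$ has $\omega$-measure tending to $0$ as $\eta\to 0$, takes a maximal disjoint family of balls of radius $\eta/3$ inside that shell, notes that a fixed dilate of these balls covers $\partial Q_0$, and then uses doubling to bound $\omega(\partial Q_0)$ by a constant times the shell measure. No appeal to Lebesgue differentiation is made.

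Your route is more structural: you reduce to hyperplanes, prove the uniform density bound $\omega(B_r(x)\cap H)\le(1-2C_d^{-2})\,\omega(B_r(x))$ by placing off-$H$ balls on either side and chaining doubling twice, and then invoke Lebesgue differentiation for doubling measures to rule out any set of positive $\omega$-measure inside $H$. This buys you a stronger intermediate statement (any affine hyperplane is $\omega$-null on bounded sets) and a reusable quantitative lemma, at the cost of importing the differentiation theorem. The paper's approach is more self-contained and tailored to the cube, avoiding that machinery entirely. Both are short and valid; yours is arguably cleaner if one is willing to quote Lebesgue differentiation as standard for doubling measures.
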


Let us introduce some notation. Let $Q_0=[0,1]^n\ss \R^n$ and $\text{int} \,Q_0=\O_0$. Let $\D_0$ the collection of dyadic cubes in $Q_0$.

\begin{proof}
    It is enough to do the proof in the case of the unit cube $Q_0$. Note that $\o(\O_0\backslash[\eta,1-\eta]^n)$ goes to $0$ as $\eta$ goes to $0$ and that for each $x\in \partial Q_0$ there is a $y$ such that $\lvert x-y\rvert < \frac{\sqrt{n}\eta}{2}$ and $B(y,\frac{\eta}{3})\ss \O_o\backslash[\eta,1-\eta]^n$. So take any maximal system $B_j, j\in J$ of disjoint balls of radius $\frac{\eta}{3}$ in $\O_o\backslash[\eta,1-\eta]^n$. It is now clear that the enlarged balls $(2+3\sqrt{n})B_j, j\in J$ cover all of $\partial Q_0$ and thus $\o(\partial Q_0) \leq \sum_{j\in J}\o( (2+3\sqrt{n})B_j)\leq C\sum_{j\in J}\o(B_j) \leq \o (\O_0\backslash[\eta,1-\eta]^n)$ which tends to $0$ as $\eta\to 0$.
\end{proof}

\begin{dfn}\label{dfn.0.1}
    Let $\varepsilon_0>0$ be given and small. If $E\ss \O_0$, a good $\varepsilon_0$-cover for $E$ of length $k$ is a collection of nested open sets $\left\{ \O_i \right\}_{i=1}^k$ with $E\sse \O_k \sse \O_{k-1} \sse \cdots \sse \O_1\sse \O_0$ such that for $l=1,\dots,k$,
    \begin{enumerate}
        \item [(i)] $\O_l\sse \bigcup_{i=1}^\infty S_i^{(l)}, \bigcup_{i=1}^\infty S_i^{(l)}\backslash \O_l \ss \partial Q_0$, where each $S_i^{(l)}$ is a dyadic cube in $\R^n$,
        \item [(ii)] $\bigcup_{i=1}^\infty S_i^{(l)} \ss \bigcup_{i=1}^\infty S_i^{(l-1)}$ and
        \item [(iii)] for all $1\leq l\leq k$, $\o (\O_l\bigcap S_i^{(l-1)})\leq \varepsilon_0\o(S_i^{(l-1)}).$
    \end{enumerate}
\end{dfn}

In the definition above the only difference between $\O_l$ and $\bigcup_{i=1}^\infty S_i^{(l)}$ is that we require $\O_l$ to be an open subset of the unit cube while the dyadic cubes $S_i^{(l)}$ are assumed to be closed and might intersect the boundary of the unit cube, condition (i) reflects this fact. 
Condition (ii) ensures that for each $\s$ there exists $S_j^{(l-1)}$ such that $\s\subset S_j^{(l-1)}$.
Note that condition (iii) in Definition \ref{dfn.0.1} above implies that each $S_i^{(l)}$ is properly contained in some $S_j^{(l-1)}$. To see this, observe that since $\O_l\ss \O_{l-1}$, $S_i^{(l)}$ must intersect some $S_j^{(l-1)}$. The inclusion $S_j^{(l-1)}\ss S_i^{(l)}$ is not possible for $\o (S_j^{(l-1)})\leq \o (S_j^{(l-1)}\cap \O_l)$ and (iii) gives a contradiction. If in Definition \ref{dfn.0.1} above we can take $k=+\infty$ then $\left\{ \O_l \right\}$ is called a good cover of infinite length.

\begin{lma}\label{lma.0.2}
    If $\left\{ \O_l \right\}$ is a good $\varepsilon_0$-cover of $E$ of length $k$ and $k\geq l>m\geq 1$, then $\o(S_j^{(m)}\cap \O_l)\leq \varepsilon_0^{l-m}\o(S_j^{(m)})$.
\end{lma}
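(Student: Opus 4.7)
The plan is to prove this by induction on $l-m$, using the recursive structure built into the good $\varepsilon_0$-cover.

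For the base case $l - m = 1$, the claim is simply condition (iii) of Definition \ref{dfn.0.1} applied to $\Omega_{l}\cap S_j^{(l-1)}$.

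For the inductive step, suppose the conclusion holds whenever the gap is $l-1-m$; we want to upgrade by one factor of $\varepsilon_0$. The idea is to slice $S_j^{(m)}\cap\Omega_l$ using the cubes $\{S_i^{(l-1)}\}_i$ from one level up. First, observe that $\Omega_l\subset\bigcup_i S_i^{(l-1)}\cup \partial Q_0$ by condition (i), and $\omega(\partial Q_0)=0$ by Lemma \ref{lma.0.1} applied to each dyadic boundary face (in fact we only need that boundaries of dyadic cubes are $\omega$-null, which follows from the lemma). Next, I would argue that every $S_i^{(l-1)}$ which meets $S_j^{(m)}$ is actually contained in $S_j^{(m)}$: the chain of proper containments noted after Definition \ref{dfn.0.1} places each $S_i^{(l-1)}$ inside some ancestor at the $m$-th level, and disjointness of the $S^{(m)}$'s (together with the fact that two intersecting dyadic cubes are nested) forces that ancestor to be $S_j^{(m)}$ itself.

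Once this reduction is set, I can estimate
\begin{equation*}
\omega(S_j^{(m)}\cap\Omega_l)\le \sum_{S_i^{(l-1)}\subset S_j^{(m)}}\omega(S_i^{(l-1)}\cap\Omega_l)\le \varepsilon_0\sum_{S_i^{(l-1)}\subset S_j^{(m)}}\omega(S_i^{(l-1)}),
\end{equation*}
using condition (iii) in the last step. Because the $S_i^{(l-1)}$ are pairwise disjoint and all sit (modulo an $\omega$-null boundary set) inside $S_j^{(m)}\cap\Omega_{l-1}$, the sum is bounded by $\omega(S_j^{(m)}\cap\Omega_{l-1})$. Finally, the induction hypothesis gives $\omega(S_j^{(m)}\cap\Omega_{l-1})\le \varepsilon_0^{l-1-m}\omega(S_j^{(m)})$, and combining the two inequalities yields the desired bound $\varepsilon_0^{l-m}\omega(S_j^{(m)})$.

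The main technical point — and the only real obstacle — is justifying the geometric step that every $S_i^{(l-1)}$ which meets $S_j^{(m)}$ is already contained in it, so that the collection $\{S_i^{(l-1)}:S_i^{(l-1)}\subset S_j^{(m)}\}$ exactly accounts for $S_j^{(m)}\cap\bigcup_i S_i^{(l-1)}$. This is where conditions (i)–(iii) are used together with Lemma \ref{lma.0.1} (to discard the $\omega$-measure of $\partial Q_0$ and of dyadic cube boundaries). Everything else reduces to a clean telescoping of condition (iii).
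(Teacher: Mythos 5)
Your proof is correct and follows essentially the same approach as the paper: both iterate condition (iii) of Definition \ref{dfn.0.1} using the nesting of the dyadic cubes across levels (with the $\omega$-nullity of dyadic boundaries from Lemma \ref{lma.0.1} handling overlaps). The only cosmetic difference is that the paper organizes the iteration starting from level $m+1$ and descending, whereas you set up a formal induction on the gap $l-m$ and slice at level $l-1$; the telescoping mechanism is identical.
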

\begin{proof}
    From the remark following the definition above, we have
    \begin{equation*}
        \O_{m+1}\cap S_j^{(m)} \sse \bigcup \left\{ S_i^{(m+1)}: S_i^{(m+1)}\ss S_j^{(m)} \right\}
    \end{equation*}
    and the inequality (iii) in Definition \ref{dfn.0.1} can be iterated $l-m$ times.
\end{proof}

\begin{lma}\label{lma.0.3}
    Given $\varepsilon_0>0$, there exists $\delta_0>0$ such that if $E\sse \O_0$ and $\o(E)\leq \delta_0$, then $E$ has a good $\varepsilon_0$-cover of length $k$, with $k\to \infty$ as $\omega(E)\to 0$. (In fact, $k\sim \frac{\log \o(E)}{\log\varepsilon_0}.$) 
\end{lma}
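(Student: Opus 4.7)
The plan is to construct $\{\Omega_l\}_{l=1}^k$ by iterating a dyadic Calder\'on--Zygmund stopping-time argument, using that $\omega$ is doubling; let $C$ denote a dyadic doubling constant, so $\omega(\hat Q) \leq C\,\omega(Q)$ for every dyadic cube $Q$ and its parent $\hat Q$. Setting $S_1^{(0)} := Q_0$ and $\eta_0 := \omega(E)/\omega(Q_0) \leq \delta_0$, I will maintain through the induction the quantity $\eta_l := \sup_j \omega(E \cap S_j^{(l)})/\omega(S_j^{(l)})$, continuing as long as $\eta_{l-1} < \varepsilon_0$.

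At step $l$, I would choose the stopping threshold $\tau_l := \eta_{l-1}/\varepsilon_0 < 1$ and, inside each $S_j^{(l-1)}$, pick the maximal (necessarily proper) dyadic sub-cubes $R$ with $\omega(E \cap R)/\omega(R) > \tau_l$; these, re-indexed over $j$, are the $\{S_i^{(l)}\}_i$, and I set $\Omega_l := \mathrm{int}\bigl(\bigcup_i S_i^{(l)}\bigr) \cap \Omega_0$. Condition (ii) and the inclusion $\Omega_l \subseteq \Omega_{l-1}$ are immediate from the nested selection; (i) follows because the $S_i^{(l)}$ are closed dyadic cubes and, by Lemma~\ref{lma.0.1}, their union differs from $\Omega_l$ only on a subset of $\partial Q_0$ of $\omega$-measure zero; and the containment $E \cap S_j^{(l-1)} \subseteq \bigcup_i S_i^{(l)}$ (up to an $\omega$-null set) holds because $\tau_l < 1$, so $\omega$-Lebesgue differentiation forces $\omega(E \cap R)/\omega(R) \to 1$ at $\omega$-a.e. point of $E$. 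Condition (iii) is then the standard Calder\'on--Zygmund bound
\[
\omega\bigl(\Omega_l \cap S_j^{(l-1)}\bigr) \;\leq\; \tau_l^{-1}\,\omega\bigl(E \cap S_j^{(l-1)}\bigr) \;\leq\; \frac{\eta_{l-1}}{\tau_l}\,\omega(S_j^{(l-1)}) \;=\; \varepsilon_0\,\omega(S_j^{(l-1)}).
\]

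Doubling applied to the (unselected) dyadic parent of each $S_i^{(l)}$ upgrades the trivial bound to $\eta_l \leq C\tau_l = C\eta_{l-1}/\varepsilon_0$, so by induction $\eta_l \leq \delta_0(C/\varepsilon_0)^l$. The iteration proceeds until the first $l$ with $\eta_{l-1} \geq \varepsilon_0$, yielding length
\[
k \;\approx\; \frac{\log(\varepsilon_0/\delta_0)}{\log(\varepsilon_0/C)} \;\sim\; \frac{\log \omega(E)}{\log \varepsilon_0} \quad \text{as } \omega(E) \to 0,
\]
since the doubling constant contributes only an $O(1)$ correction once $\varepsilon_0$ is fixed and small.

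The main obstacle is the tension among the three constraints on $\tau_l$: it must be $<1$ so that the stopping cubes actually cover $E$; it must satisfy $\tau_l \geq \eta_{l-1}/\varepsilon_0$ so that condition (iii) closes at $\varepsilon_0$; and $C\tau_l$ must remain $<\varepsilon_0$ so the next iteration is legal. These force the choice $\tau_l = \eta_{l-1}/\varepsilon_0$, which in turn produces the geometric amplification $\eta_l \leq (C/\varepsilon_0)\eta_{l-1}$ and hence the logarithmic upper bound on $k$ claimed in the lemma.
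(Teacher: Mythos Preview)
Your argument is correct in substance and takes a genuinely different route from the paper. The paper builds the cover from the inside out: it first encloses $E$ in an open set $U$ with $\omega(U)\le 2\omega(E)$, sets $\widehat\Omega_k=\{x:M_\omega(\chi_U)(x)>\varepsilon_0'\}$ for a suitably small $\varepsilon_0'$, takes a Whitney decomposition of $\widehat\Omega_k$ to obtain the cubes $S_i^{(k)}$, and then iterates outward via $\widehat\Omega_{j-1}=\{x:M_\omega(\chi_{\Omega_j})(x)>\varepsilon_0'\}$. Condition (iii) there follows because each Whitney cube lies near a point where the maximal function is at most $\varepsilon_0'$. Your dyadic Calder\'on--Zygmund stopping-time argument instead builds from the outside in, tracking the density $\eta_l$ of $E$ in the selected cubes, and condition (iii) drops out directly from the stopping inequality. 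Both approaches yield $k\sim\log\omega(E)/\log\varepsilon_0$; yours is arguably more elementary in that it avoids the continuous maximal function and Whitney machinery, at the cost of having to monitor the parameter $\eta_l$ through the induction.

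Two small technical points deserve attention. First, Lebesgue differentiation gives $E\subseteq\Omega_l$ only up to an $\omega$-null set (as you note), whereas Definition~\ref{dfn.0.1} and its downstream uses ask for genuine containment; this is easily repaired by first replacing $E$ with an open $U\supset E$ of comparable $\omega$-measure (as the paper does), since every point of an open set is contained in arbitrarily small dyadic cubes lying entirely in $U$, hence of density $1>\tau_l$. Second, with $\Omega_l:=\operatorname{int}\bigl(\bigcup_i S_i^{(l)}\bigr)\cap\Omega_0$ the set $\bigcup_i S_i^{(l)}\setminus\Omega_l$ will in general contain exposed faces of selected cubes lying in the interior of $Q_0$, so condition (i) as literally stated (the difference contained in $\partial Q_0$) is not met; what you actually obtain, and all that the subsequent lemmas use, is the weaker statement $\omega\bigl(\bigcup_i S_i^{(l)}\setminus\Omega_l\bigr)=0$, which follows from Lemma~\ref{lma.0.1}.
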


\begin{proof}
    Let $0< \varepsilon_0'<1$ be fixed, to be determined later. Let $U\sse \O_0$ be an open set containing $E$ such that $\o(U)<2\o(E)$ if $\o(E)>0$ or as small as needed if $\o(E)=0$. Set
    \begin{equation} \label{eqn.0.1}
        \widehat\O_k = \left\{ x: M_\o(X_U)(x)>\varepsilon_0' \right\}, 
    \end{equation}
    where
    \begin{equation}
        M_\o(g)(x)=\sup \left\{ \frac{1}{\o(B)}\int_B g\, d\o: B\text{ ball containing }x, B\sse \O_0 \right\}.
        \label{eqn.0.2}
    \end{equation}
    Note that $\widehat\O_k$ is open and since $U$ is open then $U\sse \widehat O_k$. Moreover since $\o$ is doubling,
    \begin{equation}
        \o(\widehat\O_k) \leq \frac{C}{\varepsilon_0'}\o(U)< \frac{2C}{\varepsilon_0'}\o(E).
        \label{eqn.0.3}
    \end{equation}
    If
    \begin{equation}
        \frac{2C}{\varepsilon_0'}\o(E)\leq \frac{2C\delta_0}{\varepsilon_0'}<\frac{1}{8},
        \label{eqn.0.4}
    \end{equation}
    then $\widehat\O_k$ has a Whitney decomposition, $\widehat\O_k=\bigcup_{i}\widehat S_i^{(k)}$, and for each $\widehat S_i^{(k)}$ there exists a point $P_i^{(k)}\in \widehat\O_k^c$ such that $\text{dist }(P_i^{(k)}, \widehat S_i^{(k)}) \simeq \text{diam }(\widehat S_i^{(k)}).$ Since $P_i^{(k)}\in \widehat \O_k^c$, for any ball $B$ containing $P_i^{(k)}$, $\frac{\o(U\cap B)}{\o(B)}\leq \varepsilon_0'$. Thus choosing a ball of radius comparable to $\text{diam}\, \widehat S_i^{(k)}$ and containing $\widehat S_i^{(k)}$, we have
    \begin{equation}
        \frac{\o(U\cap \widehat S_i^{(k)})}{\o(\widehat S_i^{(k)})}\leq C\varepsilon_0' < \varepsilon_0,
        \label{eqn.0.5}
    \end{equation}
    taking $\varepsilon_0'$ small enough depending only on the doubling constant of $\o$. Thus, given $\varepsilon_0$, select $\varepsilon_0'$ so that \eqref{eqn.0.5} holds then select an $\delta_0$ so that \eqref{eqn.0.4} also holds. Let $k$ be the largest integer such that $\left( \frac{C}{\varepsilon_0'} \right)^k\o(E)<\frac{1}{8}$. Let
    \begin{equation}
        \O_k = \O_0\cap \widehat \O_k, \text{ and }S_i^{(k)}=\widehat S_i^{(k)} \text{ if } \widehat S_i^{(k)}\subset Q_0.
        \label{eqn.0.6}
    \end{equation}
Note that $U\ss \O_k$. Since $\o(\partial Q_0)=0$, $\O_l\sse \bigcup_{i=1}^\infty S_i^{(l)}$ with $\bigcup_{i=1}^\infty S_i^{(l)}\backslash \O_l \ss \partial Q_0$ and \eqref{eqn.0.5} still holds with $S_i^{(k)}$ in place of $\widehat S_i^{(k)}$. We conclude that
\begin{equation}
    \o(U)\leq \varepsilon_0\o(\O_k).
    \label{eqn.0.7}
\end{equation}
Moreover \eqref{eqn.0.3} ensures that
\begin{equation}
    \o(\O_k)\leq \frac{C}{\varepsilon_0'}\o(U)\leq \frac{2C}{\varepsilon_0'}\o(E).
    \label{eqn.0.8}
\end{equation}
For $k-1\leq j\leq 1$, with $k$ as above, set
\begin{equation}
    \widehat \O_{j-1}=\left\{ x: M_\o(X_{\O_j})>\varepsilon_0' \right\} \text{ and }\O_{j-1}=\O_0\cap \widehat \O_{j-1}.
    \label{eqn.0.9}
\end{equation}
Our choice of $\delta_0$ ensures that $\widehat \O_{j-1}$ admits a dyadic decomposition $\widehat\O_{j-1}=\cup_i \widehat S_i^{(j-1)}$. As before we select $\{ S_i^{(j-1)} \}_i$ which satisfy $\o(\O_j\cap S_i^{(j-1)})\leq \varepsilon_0\o(S_i^{(j-1)})$ (see \eqref{eqn.0.5}). Note that \eqref{eqn.0.8} yield
\begin{equation}
    \o(\O_{j-1}) \leq \frac{C}{\varepsilon_0'}\o(\O_j)\leq \left( \frac{C}{\varepsilon_0'} \right)^{k-j}\o(\O_k)\leq 2\left( \frac{C}{\varepsilon_0'} \right)^{k-j+1}\o(E).
    \label{eqn.0.10}
\end{equation}
It is straightforward to show that $\left\{ \O_i \right\}_{i=1}^k$ is a good $\varepsilon_0$-cover for $E$. Note that by the definition of $k$, $\left( \frac{C}{\varepsilon_0'} \right)^k\o(E)\sim 1$, that is by our choice of $\varepsilon_0'$ in terms of $\varepsilon_0$, $k\sim \frac{\log\o(E)}{\log \varepsilon_0}$ and $k\to \infty$ as $\o(E)\to 0$.

\end{proof}
We need to introduce some additional notation. In particular we describe a way to select half of the children of any given dyadic cube. Note that any dyadic cube in $\R^n$ is a translation and dilation of the unit cube $[0,1]^n$. In particular if $Q\ss Q_0$ of side length $\ell(Q)=2^{-m}$ for some $m\geq 0$ then $Q=\Pi_{i=1}^n\left[ \frac{\tau_i}{2^{-m}}, \frac{\tau_i+1}{2^{-m}} \right]$ with $0\leq \tau_i<2^{-m}-1$. Let $e_Q=\left( \frac{\tau_1}{2^{-m}},\dots,\frac{\tau_n}{2^{-m}}\right)$ be the \e{southwest} corner of $Q$, then $Q=e_Q+2^{-m}Q_0=e_Q+\ell(Q)Q_0$. Let $Q_0=\cup_{i=1}^{2^n}Q_i^{(0)}$, where the $Q_i^{(0)}$'s are the children of $Q_0$. Select any $2^{n-1}$ children of $Q_0$ for example $\left\{ Q_i^0 \right\}_{i=1}^{2^{n-1}}$. Let $\widetilde Q_0=\cup_{i=1}^{2^{n-1}}Q_i^{(0)}.$ For $Q=e_Q+\ell(Q)Q_0$ a dyadic cube in $Q_0$, let $\widetilde Q=e_Q+\ell(Q)Q_0$.

\begin{lma}\label{lma.0.4}
    Let $\o$ be a doubling measure in $\R^n$. There exists $\alpha_0\in (0,\frac{1}{4})$ depending on $n$ and the doubling constant on $\o$ such that for any dyadic cube $S$ in $\R^n$, if $\left\{ Q_i \right\}_{i=1}^{2^n}$ denote its children and $\widetilde S=\cup_{i=1}^{2^{n-1}}$ then
    \begin{equation}
        \alpha_0\leq \frac{\o(\widetilde S)}{\o(S)}\leq 1-\alpha_0.
        \label{eqn.0.11}
    \end{equation}
\end{lma}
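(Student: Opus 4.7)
The plan is to reduce the statement to a quantitative doubling estimate saying that \emph{each} child of a dyadic cube carries a fixed fraction of its parent's $\omega$-measure, and then to combine this with Lemma \ref{lma.0.1} to turn the inequality into the required two-sided bound.

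First, I would use Lemma \ref{lma.0.1}: applied to each of the $2^n$ children $Q_1,\dots,Q_{2^n}$ of $S$ (which are themselves dyadic cubes), it gives $\omega(\partial Q_i)=0$. Hence the shared faces of the children carry no $\omega$-mass, and so
\begin{equation*}
\omega(S)=\sum_{i=1}^{2^n}\omega(Q_i),\qquad \omega(\widetilde S)=\sum_{i=1}^{2^{n-1}}\omega(Q_i),\qquad \omega(S)-\omega(\widetilde S)=\sum_{i=2^{n-1}+1}^{2^n}\omega(Q_i).
\end{equation*}
Thus the two-sided bound \eqref{eqn.0.11} will follow once every child satisfies $\omega(Q_i)\ge c_n\,\omega(S)$ for a constant $c_n>0$ depending only on $n$ and the doubling constant.

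To prove that pointwise lower bound, let $x_i$ be the center of $Q_i$ and let $B_i=B(x_i,\ell(S)/4)$ be the inscribed ball of $Q_i$, so $B_i\subset Q_i$. A direct computation with dyadic centers shows that every point of $S$ lies within distance $\tfrac{3\sqrt n}{4}\ell(S)$ of $x_i$, hence $S\subset 3\sqrt n\,B_i$. Iterating the doubling condition $k=\lceil\log_2(3\sqrt n)\rceil$ times yields
\begin{equation*}
\omega(S)\le \omega(3\sqrt n\,B_i)\le C_d^{\,k}\,\omega(B_i)\le C_d^{\,k}\,\omega(Q_i),
\end{equation*}
where $C_d$ is the doubling constant of $\omega$. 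Setting $c_n=C_d^{-k}$ gives the desired lower bound for each child.

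Assembling the pieces, $\widetilde S$ contains at least one child of $S$, so $\omega(\widetilde S)\ge c_n\,\omega(S)$; and the complement $S\setminus\widetilde S$ also contains children (in fact $2^{n-1}$ of them, modulo a set of $\omega$-measure zero by the first step), so $\omega(S)-\omega(\widetilde S)\ge c_n\,\omega(S)$, i.e.\ $\omega(\widetilde S)\le (1-c_n)\,\omega(S)$. Taking $\alpha_0=\min\{c_n,1/4\}$ gives the conclusion. There is no real obstacle here: the lemma is essentially a bookkeeping consequence of doubling plus the fact that dyadic boundaries are $\omega$-null, and the only care needed is to verify the geometric inclusion $S\subset 3\sqrt n\,B_i$ so that the doubling constant is applied a fixed, dimension-only number of times.
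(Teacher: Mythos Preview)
Your proof is correct and follows essentially the same approach as the paper: both arguments reduce to the fact that doubling forces each child $Q_i$ to carry at least a fixed fraction $1/\kappa$ of $\omega(S)$, obtained by comparing a set inside $Q_i$ to a dilate containing $S$. The only cosmetic difference is that the paper uses the cube inclusions $\tfrac12 Q_i\subset Q_i\subset S\subset 3Q_i$ and thereby avoids invoking Lemma~\ref{lma.0.1} (since $\tfrac12 Q_{2^n}$ is strictly interior to $Q_{2^n}$, the upper bound $\omega(\widetilde S)\le \omega(S)-\omega(\tfrac12 Q_{2^n})$ needs no null-boundary argument), whereas you use inscribed balls and appeal to Lemma~\ref{lma.0.1} to make the children disjoint in measure; either route works.
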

\begin{proof}
    Note that with the notation above for $i=1,\dots,2^n$, $\frac{1}{2}Q_i\ss Q_i\ss S\ss 3Q_i$. Thus since $\o$ is doubling $\o(S)\leq \kappa\o(\frac{1}{2}Q_i)$, where $\kappa>1$ depends on $n$ and the doubling constant of $\o$. Note that $\o(\widetilde S)\geq \o(Q_i)\geq \frac{1}{\kappa}\o(S)$ and $\o(\widetilde S)=\o (S\backslash \cup_{i=2^{n-1}+1}^{2^n}Q_i) \leq \o(S)-\o(\frac{1}{2}Q_{2^n})\leq \left( 1-\frac{1}{\kappa} \right)\o(S)$. This shows \eqref{eqn.0.11} with $\alpha_0=\frac{1}{\kappa}.$
\end{proof}

\begin{rmk}\label{rmk0.1}
    Let $S$ be a dyadic cube and let $x_S$ be its center. $\widetilde S$ can be chosen such that for $0<r<r_0=r_0(n)$, there exist balls $ B_1\ss \widetilde S$ and $B_2\ss S\backslash\widetilde S$
    of radius $r\ell(S)$, and ${\text{ dist }}(x_S, B_i)\simeq r\ell(S)$ with comparison constants depending only on $n$.
    
\end{rmk}

\begin{lma}\label{lma.0.5}
    Let $\o$ be a doubling measure in $\R^n$. Given $ M\gg 1$ there exists $\delta_0>0$ such that if $E\ss \O_0$ with $\o(E)<\delta$ and $\delta\leq \delta_0$ then there is a function $F\in L^\infty(\o)$ such that for all $x\in E$
    \begin{equation}
        \sum_{Q,Q'\in \D_0,x\in Q'\ss Q}\left\lvert \fint_Q Fd\o -\fint_{Q'} Fd\o \right\rvert^2\geq M,
        \label{eqn.0.12}
    \end{equation}
    where $Q'$ is a child of $Q$.
\end{lma}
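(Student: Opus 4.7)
The plan is to write $F = \sum_{l=1}^{k} g_l$ where each $g_l$ is a mean-zero building block associated to the $l$-th level of a good $\varepsilon_0$-cover of $E$ produced by Lemma \ref{lma.0.3}. Two features will carry the argument: the supports of the $g_l$'s will be pairwise disjoint, so that $F$ is uniformly bounded; and, for each $x\in E$ and each $l\in\{1,\dots,k\}$, a single $g_l$ will produce one dyadic martingale jump of order one at a specific parent-child pair, with these pairs distinct across $l$. Summing the squares of these jumps will then give at least $kc_0^2$, and Lemma \ref{lma.0.3} guarantees $k\to\infty$ as $\omega(E)\to 0$, so the sum exceeds $M$ once $\omega(E)$ is small enough.

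Concretely, I would fix $\varepsilon_0\in(0,\alpha_0)$ small (with $\alpha_0$ from Lemma \ref{lma.0.4}), let $\{\Omega_l\}_{l=1}^{k}$ be the good cover, and define, inside each cover cube $S_i^{(l-1)}$,
\begin{equation*}
g_l \;=\; \mathbf{1}_{\widetilde{S}_i^{(l-1)}\setminus\Omega_l} \;-\; c_i^{(l)}\,\mathbf{1}_{(S_i^{(l-1)}\setminus\widetilde{S}_i^{(l-1)})\setminus\Omega_l},
\end{equation*}
with $\widetilde{S}_i^{(l-1)}$ the half from Lemma \ref{lma.0.4} and $c_i^{(l)}>0$ chosen so that $\int_{S_i^{(l-1)}} g_l\, d\omega = 0$; I extend $g_l$ by zero outside $\Omega_{l-1}$. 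The hypothesis $\varepsilon_0<\alpha_0$ together with Lemma \ref{lma.0.4} and doubling pins $c_i^{(l)}$ inside a compact interval $[c_1,c_2]\subset(0,\infty)$ depending only on $\alpha_0$, $\varepsilon_0$ and the doubling constant, so $\|g_l\|_\infty$ is bounded uniformly in $l,i$. Because the support of $g_l$ lies in $\Omega_{l-1}\setminus\Omega_l$, the supports of the $g_l$'s are pairwise disjoint and $F := \sum_{l=1}^{k} g_l$ is bounded.

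For the lower bound, I would fix $x\in E$ and, for each $l$, let $S_l$ be the cover cube at level $l-1$ containing $x$ (with $S_1=Q_0$) and $C_l$ the dyadic child of $S_l$ containing $x$; distinctness of the pairs $(S_l,C_l)$ across $l$ follows from the strict nesting of the $S_l$'s recorded after Definition \ref{dfn.0.1}. The heart of the argument is a short telescoping computation in the three cases $m<l$, $m=l$, $m>l$, showing $\fint_{S_l} F\,d\omega = 0$ and $\fint_{C_l} F\,d\omega = \fint_{C_l} g_l\,d\omega$: for $m<l$, $g_m$ vanishes on $S_l\subset\Omega_m$; for $m=l$, the zero-mean design handles the average over $S_l$ directly; for $m>l$, every cover cube $S_j^{(m-1)}$ that meets $S_l$ (respectively $C_l$) is a strictly smaller dyadic cube contained in $S_l$ (respectively in $C_l$, since a strictly smaller dyadic cube meeting $C_l$ must lie inside it), and $g_m$ integrates to zero there. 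Using doubling to get $\omega(C_l)\geq c\,\omega(S_l)$ together with $\omega(\Omega_l\cap C_l)\leq \varepsilon_0\omega(S_l)$, one then checks $|\fint_{C_l} g_l\,d\omega|\geq c_0>0$, and the proof closes by choosing $\delta_0$ so that $k\geq M/c_0^2$ whenever $\omega(E)\leq\delta_0$.

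The delicate point I foresee is the joint calibration of $\varepsilon_0$, $\alpha_0$ and the doubling constant: zeroing out $g_l$ on $\Omega_l$ is precisely what makes the supports disjoint and $F$ bounded, but it threatens both the zero-mean property of $g_l$ on the cover cubes and the order-one lower bound on $\fint_{C_l}g_l$. Both survive because the good cover makes $\omega(\Omega_l\cap S_i^{(l-1)})$ as small a fraction of $\omega(S_i^{(l-1)})$ as desired, while Lemma \ref{lma.0.4} keeps $\widetilde{S}_i^{(l-1)}$ a macroscopic fraction of $S_i^{(l-1)}$; the remaining balance is handled by requiring $\varepsilon_0\ll\alpha_0$ relative to the doubling constant.
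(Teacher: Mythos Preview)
Your construction is correct and proves the lemma, but it differs from the paper's in an interesting way. The paper takes
\[
F=\sum_{j=2}^{k}\chi_{\widetilde{\mathcal U}_{j-1}\setminus\mathcal U_j},
\]
observes that the supports are disjoint so that $F=\chi_H$ is a genuine characteristic function, and then computes $\fint_{S_i^{(l)}}F-\fint_{T_i^{(l)}}F$ directly: the terms with $j\le l$ vanish by containment, the terms with $j\ge l+2$ are $O(\varepsilon_0)$ by Lemma \ref{lma.0.2}, and the single term $j=l+1$ contributes a quantity pinned between $\alpha_0-O(\varepsilon_0)$ and $1-\alpha_0+O(\varepsilon_0)$ by Lemma \ref{lma.0.4}. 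Your mean-zero design replaces this error bookkeeping by exact cancellation: the $g_m$ with $m>l$ integrate to zero on $C_l$ and $S_l$, so no $\varepsilon_0$-tail appears at all, and the only place $\varepsilon_0$ enters is in bounding $c_i^{(l)}$ and in the single estimate $\omega(C_l\cap\Omega_l)\le\kappa\varepsilon_0\,\omega(C_l)$.

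What you gain is a cleaner telescoping argument. What the paper gains is that its $F$ is literally $\chi_H$ with $H\subset Q_0$ Borel; this is not incidental, since the very same $F$ is reused as Dirichlet data in Proposition \ref{prop.1.1} and Lemma \ref{lma.1.3}, where the maximum principle and the boundary H\"older estimate are applied to a solution with $0\le u\le 1$. Your $F$ takes values in $\{-c_i^{(l)},0,1\}$ with $c_i^{(l)}$ varying, so it does not feed directly into that application; one would either have to redo the PDE argument with signed bounded data (harmless but a bit more writing) or observe that $\{F>0\}$ and $\{F<0\}$ are themselves characteristic-function building blocks with comparable oscillation. A minor bookkeeping point: your $S_1=Q_0$ with its implied ``level $0$'' cube is not part of the good cover of Definition \ref{dfn.0.1}, and the estimate $\omega(\Omega_1)\le\varepsilon_0\,\omega(Q_0)$ is not asserted there, so it is cleanest to run your sum from $l=2$ to $k$ (exactly as the paper does), which costs only one term.
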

\begin{proof}
    Let $\varepsilon_0=\varepsilon_0(M)>0$ to be determined later, and let $\delta_0$ be the corresponding quantity from Lemma \ref{lma.0.3}. Assume $\o(E)<\delta_0$ then $E$ has a good $\varepsilon_0$-cover $\left\{ \O_i \right\}_{i=1}^k$ with $k\sim \frac{\log \o(E)}{\log \varepsilon_0}.$ Let $\{ S_i^{(l)} \}_{1\leq l\leq k,i\geq 1}$ be as in Definition \ref{dfn.0.1}.

    Let 
\begin{equation}\label{ppt-u}
\cU_l=\cup_{i\ge 1} S_i^{(l)}
\hbox{  and  }\widetilde\cU_l=\cup_{i\ge 1} \widetilde S_i^{(l)},
\end{equation}
 where $ \widetilde S_i^{(l)}$ is as in Lemma \ref{lma.0.4}. Recall that $\cO_l\subset \cU_l$, $\cU_l\backslash \cO_l\subset\partial Q_0$ and by assumption $\om(\partial Q_0)=0$. 
Note that since $E\cap\partial Q_0$ by Definition \ref{dfn.0.1} we have
\begin{equation}\label{u-inclusion}
E\subseteq \cU_k\subseteq \cU_{k-1}\subseteq \cdots \subseteq \cU_1
\end{equation}
and for all $1\le l\le k$
\begin{equation}\label{u-small}
\omega(\cU_l\cap
S^{(l-1)}_i)\le\varepsilon_0 \omega(S^{(l-1)}_i).
\end{equation}

Define
    \begin{equation}
        F=\sum_{j=2}^k \chi_{\widetilde \U_{j-1}\backslash \U_j}.
        \label{eqn.0.13}
    \end{equation}
    We first remark that the function $F$ only takes values $0$ and $1$. In fact, let $x\in \R^n$, $\chi_{\widetilde \U_{j-1}\bs \cU_j}(x)=1$ if and only if $x\in \wt \U_{j-1}\bs \U_j$. Since $x\notin \U_j$ then $x\notin \U_l$ for any $l=j,\dots,k$ because in this case $\U_l\ss \U_j$. Since $\wt \U_l\ss \U_l$ then $x\notin \wt \U_l$ for $l=j,\dots,k$. In particular for $l=j,\dots,k$, $\chi_{\wt \U_l\bs \U_{l+1}}=0$. 
 On the other hand since $x\in \wt\U_{j-1}$ then $x\in \U_{j-1}\subset\U_l$ for any $l\le j-1$ and  $\chi_{\U_{l-1}\bs \U_l}(x)=0$
    Thus there is at most one $j$ for which $\chi_{\U_{j-1}\bs \U_j}(x)$ is different than $0$. That is $F=\chi_{\{x:F(x)=1\}}.$

    For $x\in E$, as $x\in \O_l$ for all $l=1,\dots,k$, there is $S_i^{(l)}\ni x$. Let $T_i^{(l)}$ be the child of $S_i^{(l)}$ containing $x$. In order to prove \eqref{eqn.0.12} we need to estimate
    \begin{equation}
        \fint_{S_i^{(l)}}Fd\o-\fint_{T_i^{(l)}}Fd\o.
        \label{eqn.0.14}
    \end{equation}
    We consider each term separately.
    \begin{align}
        \fint_{S_i^{(l)}} Fd\o &= \frac{1}{\o(S_i^{(l)})}\sum_{j=2}^k\o\left( ( \wt \U_{j-1}\bs \U_j )\cap S_i^{(l)} \right)\nonumber\\
        &= \frac{1}{\o(S_i^{(l)})}\sum_{j=2}^l \o\left( ( \wt \U_{j-1}\bs \U_j )\cap S_i^{(l)} \right) + \frac{1}{\o(S_i^{(l)})}\sum_{j=l+2}^k \o\left( ( \wt \U_{j-1}\bs \U_j )\cap S_i^{(l)} \right)
        \nonumber\\
        &+ \frac{\o\left( ( \wt \U_{l}\bs \U_{l+1} )\cap S_i^{(l)} \right)}{\o(S_i^{(l)})},
        \label{eqn.0.15}
    \end{align} 
    and
    \begin{align}
        \fint_{T_i^{(l)}} Fd\o &=  \frac{1}{\o(T_i^{(l)})}\sum_{j=2}^k\o\left( ( \wt \U_{j-1}\bs \U_j )\cap T_i^{(l)} \right)\nonumber\\
        &= \frac{1}{\o(T_i^{(l)})}\sum_{j=2}^l \o\left( ( \wt \U_{j-1}\bs \U_j )\cap T_i^{(l)} \right) + \frac{1}{\o(T_i^{(l)})}\sum_{j=l+2}^k \o\left( ( \wt \U_{j-1}\bs \U_j )\cap T_i^{(l)} \right)
        \nonumber\\
        &+ \frac{\o\left( ( \wt \U_{l}\bs \U_{l+1} )\cap T_i^{(l)} \right)}{\o(T_i^{(l)})},
        \label{eqn.0.16}
    \end{align}
    For $2\leq j\leq l$, $T_i^{(l)}\ss S_i^{(l)}\ss \U_l\ss \U_j$ thus 
    \begin{equation*}
    \o\left( ( \wt \U_{j-1}\bs \U_j )\cap S_i^{(l)} \right)=0\hbox{  and  }\o\left( ( \wt \U_{j-1}\bs \U_j )\cap T_i^{(l)} \right)=0.
    \end{equation*} 
    For $l+2\leq j\leq k$ by Lemma \ref{lma.0.2} and the fact that $\o$ is doubling we have
    \begin{eqnarray}
        \o\left( ( \wt \U_{j-1}\bs \U_j )\cap T_i^{(l)} \right)&\leq& \o\left( ( \wt \U_{j-1}\bs \U_j )\cap S_i^{(l)} \right)
        \nonumber\\
        &\leq&\o\left( \wt \U_{j-1}\cap S_i^{(l)} \right)\leq \varepsilon_0^{j-1-l}\o(S_i^{(l)})\leq \varepsilon_0^{j-1-l}\kappa\o(T_i^{(l)}),
        \label{eqn.0.17}
    \end{eqnarray}
    where the constant $\kappa$ only depends on $n$ and the doubling constant of $\o$. Therefore,
    \begin{eqnarray}
        \left\lvert \fint_{S_i^{l}}Fd\o-\fint_{T_i^{(l)}}Fd\o-\left( \frac{\o\left( ( \wt \U_{l}\bs \U_{l+1} )\cap S_i^{(l)} \right)}{\o(S_i^{(l)})} - \frac{\o\left( ( \wt \U_{l}\bs \U_{l+1} )\cap T_i^{(l)} \right)}{\o(T_i^{(l)})}\right)\right\rvert\nonumber\\
        \leq \sum_{j=l+2}^k \varepsilon_0^{j-1-l}+\varepsilon_0^{j-1-l}\kappa\leq (1+\kappa)\frac{\varepsilon_0}{1-\varepsilon_0}.
        \label{eqn.0.18}
    \end{eqnarray}

    Note that since $\wt S_i^{(l)}\ss S_i^{(l)}$,
    \begin{eqnarray}
        \o\left( ( \wt \U_l\bs \U_{l+1} )\cap S_i^{(l)} \right)&=& \o\left( ( \wt S_i^{(l)}\bs \U_{l+1} )\cap S_i^{(l)} \right)
        \nonumber\\
        &=& \o\left( \widetilde S_i^{(l)}\cap \U_{l+1}^c \right)=\o\left( \wt S_i^{(l)} \right)-\o\left( \wt S_i^{(l)}\cap \U_{l+1} \right).
        \label{eqn.0.19}
    \end{eqnarray}

    Hence since for every $j$, $\o( \U_j\bs \O_j )=0$ by Lemma \ref{lma.0.1},
    \begin{equation}
        \left\lvert \o\left( (\wt \U_l\bs\U_{l+1}) \cap S_i^{l}\right)-\o\left(\wt S_i^{(l)}\right)\right\rvert \leq \o\left( S_i^{(l)}\cap \U_{l+1} \right) = \o\left(S_i^{(l)}\cap \O_{l+1}\right) \leq \varepsilon_0\o\left( S_i^{(l)} \right).
        \label{eqn.0.20}
    \end{equation}
    Combining \eqref{eqn.0.11} with \eqref{eqn.0.20} we obtain
    \begin{eqnarray}
        \o\left( \wt S_i^{(l)} \right)-\varepsilon_0\o\left(S_i^{(l)}\right) &\leq \o\left( \wt \U_l\bs \U_{l+1}\cap S_i^{(l)} \right) &\leq \o\left(\wt S_i^{(l)}\right)+\varepsilon_0\o\left( S_i^{(l)} \right)\nonumber\\
        (\alpha_0-\varepsilon_0)\,\o\left(S_i^{(l)} \right) &\leq \o\left( \wt \U_l\bs \U_{l+1}\cap S_i^{(l)} \right) &\leq (1-\alpha_0+\varepsilon_0)\,\o\left( S_i^{(l)} \right)
        \label{eqn.0.21}
    \end{eqnarray}
    To estimate $\o\left( \wt \U_l\bs \U_{l+1}\cap T_i^{(l)} \right)$, we need to consider two cases, either $T_i^{(l)}\ss \wt S_i^{(l)}$ or not. If $T_i^{(l)}\ss \wt S_i^{(l)}$ then as in \eqref{eqn.0.19} and \eqref{eqn.0.21} and by the doubling properties of $\o$ we have
    \begin{equation}
        \o\left( \wt \U_l\bs \U_{l+1}\cap T_i^{(l)} \right)=\o\left( \wt S_i^{(l)}\cap T_i^{(l)} \right)-\o \left( T_i^{(l)}\cap \U_{l+1} \right).
        \label{eqn.0.22}
    \end{equation}
    and
    \begin{align}
        \left\lvert \o\left(\wt \U_l\bs \U_{l+1} \cap T_i^{(l)}\right)-\o\left( T_i^{(l)} \right)\right\rvert &\leq \o \left( T_i^{(l)}\cap \U_{l+1} \right)\nonumber\\
        &= \o \left( T_i^{(l)}\cap \O_{l+1} \right) \leq \o\left( S_i^{(l)}\cap \O_{l+1} \right)\leq \varepsilon_0\,\o\left( S_i^{(l)} \right)\nonumber\\
        &\leq \kappa\varepsilon_0\,\o(T_i^{(l)}),
        \label{eqn.0.23}
    \end{align}
    which implies
    \begin{equation}
        \left( 1-\kappa\varepsilon_0 \right)\o\left( T_i^{(l)} \right)\leq \o\left(\wt \U_l\bs \U_{l+1} \cap T_i^{(l)}\right)\leq \o\left( T_i^{(l)} \right).
        \label{eqn.0.24}
    \end{equation}
    Combining \eqref{eqn.0.21} and \eqref{eqn.0.24} we have
    \begin{equation}
        -1+\alpha_0-\varepsilon_0\leq\frac{\o\left( ( \wt\U_l\bs \U_{l+1} )\cap S_i^{(l)} \right)}{\o\left( S_i^{(l)} \right)} - \frac{\o\left(( \wt\U_l\bs \U_{l+1} \cap T_i^{(l)} \right)}{\o\left( T_i^{(l)} \right)}
        \leq \left( \kappa+1 \right)\varepsilon_0-\alpha_0
        \label{eqn.0.25}
    \end{equation}
    Combining \eqref{eqn.0.18} and \eqref{eqn.0.25} we have that
    \begin{equation}
        \left\lvert \fint_{S_i^{(l)}}Fd\o - \fint_{T_i^{(l)}}Fd\o\right\rvert \geq \min\left\{ 1-\alpha_0+\varepsilon_0, \alpha_0-(\kappa+1)\varepsilon_0 \right\}-(1+k)\frac{\varepsilon_0}{1-\varepsilon_0}.
        \label{eqn.0.26}
    \end{equation}
    In the case $T_i^{(l)}\not\subset \wt S_i^{(l)}$ then $T_i^{(l)} \cap\ \wt \U_l\bs \U_{l+1}\ss \partial T_i^{(l)}$, and by Lemma \ref{lma.0.1}, $\o\left( T_i^{(l)}\cap \wt \U_l\bs \U_{l+1} \right)=0$. This combined with \eqref{eqn.0.21} yields
    \begin{equation}
        \alpha_0-\varepsilon_0\leq\frac{\o\left( ( \wt\U_l\bs \U_{l+1} )\cap S_i^{(l)} \right)}{\o\left( S_i^{(l)} \right)} - \frac{\o\left( ( \wt\U_l\bs \U_{l+1} )\cap T_i^{(l)} \right)}{\o\left( T_i^{(l)} \right)}
        \leq 1-\alpha_0+\varepsilon_0
        \label{eqn.0.27}
    \end{equation}
    Combining \eqref{eqn.0.18} and \eqref{eqn.0.27} we have that
    \begin{equation}
        \left\lvert \fint_{S_i^{(l)}}Fd\o - \fint_{T_i^{(l)}}Fd\o\right\rvert \geq \min\left\{ \alpha_0-\varepsilon_0, 1-\alpha_0+\varepsilon_0 \right\}-(1+k)\frac{\varepsilon_0}{1-\varepsilon_0}.
        \label{eqn.0.29}
    \end{equation}
    Choose $\varepsilon_0$ small enough so that in both cases, that \eqref{eqn.0.26} and \eqref{eqn.0.29}
    \begin{equation}
        \left\lvert \fint_{S_i^{(l)}}Fd\o - \fint_{T_i^{(l)}}Fd\o\right\rvert \geq \min\left\{ \frac{\alpha_0}{2}, 1-2\alpha_0 \right\}=\beta_0.
        \label{eqn.0.30}
    \end{equation}
    For $x\in E$ since $x\in\O_l$ for all $l=1,\dots,k$ and for $k\geq 2$
    \begin{equation}
        \sum_{Q,Q'\in \D_0,x\in Q'\ss Q}\left\lvert \fint_Q Fd\o -\fint_{Q'} Fd\o \right\rvert^2\geq (k-1)\beta_0^2\geq \frac{k\beta_0^2}{2}\geq M
        \label{eqn.0.31}
    \end{equation}
    choosing $\delta\leq \delta_0$ so that $k\sim \frac{\log \o(E)}{\log \varepsilon_0} \geq \frac{\log \delta_0}{\log \varepsilon_0}\geq\frac{2M}{\beta_0^2}.$
\end{proof}

\section{The domain above the graph of a Lipschitz function}\label{sec.1}
\setcounter{equation}{0}

We start the section working on the upper half plane $\R_+^n=\left\{ (x,t):t>0 \right\}$, with boundary $\R^{n-1}$, $x\in \R^{n-1}$. We will denote points in $\R_+^n$ by capital letters. Let $S\ss \R^{n-1}$ be a cube. Let $A_S=(x_S, \ell(S))\in \R_+^n$, where $x_S$ is the center of $S$ and $\ell(S)$ is the side-length of $S$. For $v=\left( v_1,\dots,v_n \right)\in \R^{n-1}$, $\lVert v\rVert = \max_i \lvert v_i\rvert$, and $\lvert v\rvert=\left( \sum_{i=1}^{n-1}\lvert v_i\rvert^2 \right)^{\frac12}.$ We then have $S=\left\{ x\in \R^{n-1}: \lVert x-x_S\rVert<{\ell(S)}/{2} \right\}$. Recall that
\begin{equation}\label{TS}
T(S)=\left\{ X=(x,t), x\in S, 0\leq t\leq \ell(S) \right\}
\end{equation}
and for $\eta>0$
\begin{equation}\label{TSeta}
T_\eta{S}=\left\{ X=(x,t): x\in S, \frac{\eta}{2}\ell(S)\leq t\leq \ell(S) \right\}
\hbox{ and } A_\eta(S)=\left( x_S, \frac{\eta}{2}\ell(S) \right).
\end{equation}
For $\gamma>0$, $h>0$ and $x\in \R^{n-1}$, let 
\[
\Gamma_\gamma(x,0)=\left\{ Y=(y,s)\in \R_+^n: \lvert y-x\rvert \leq \gamma s \right\}
\]
and 
\[
\Gamma_\gamma^h(x,0)=\left\{ Y\in \Gamma_\gamma(x,0), Y=(y,s): 0<s\leq h \right\}.
\]
Recall that $L=\div A(X)\nabla$ denotes a divergence form elliptic operator in $\R^n_+$, where $A(X)=\left( a_{ij}(X) \right)_{i,j=1}^n$ is a real $n\times n$ matrix with $a_{ij}\in L^\infty$ satisfying the uniform ellipticity condition, but not necessarily symmetric, and that $\o^X$ is the elliptic measure for $L$ with pole at $X$.
The main estimate in this section is the following:
\begin{prop}
    Fix a cube $Q\ss \R^{n-1}$, and the corresponding point $A_Q=(x_Q,\ell(Q))$. Then, if $E\ss Q$ is a Borel set, there are constants $\delta_0>0, \gamma>0$, and $C_1>0$, depending only on dimension and ellipticity, and a Borel set $H\ss Q$, such that if $\o^{A_Q }(E)<\delta_0$ and $u$ is the solution to
     \eqref{eqn.dp}, with $F=\chi_H$ then
    \begin{equation}
        C_1\ol k^{\frac12}\leq S_\gamma^{\gamma \ell(Q)}(u)(x,0) \text{  for all  }(x,0)\in E.
        \label{eqn.1.2}
    \end{equation}
    Here 
   \begin{equation}\label{square-funct}
   S_\gamma^{\gamma \ell(Q)}(u)(x,0)=\left( \int_{\Gamma_\gamma^{\gamma \ell(Q)}(x,0)}\lvert \nabla u(Y)\rvert^2\lvert Y-(x,0)\rvert^{2-n}dY\right)^{\frac12}
   \end{equation} and 
   \begin{equation}\label{kbar}
   \ol k=\left[ \log \frac{1}{\o^{A_Q}(E)} \right]\, .
   \end{equation}
    \label{prop.1.1}
\end{prop}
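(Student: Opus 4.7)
\medskip
\textbf{Proof plan.} The approach is to apply Lemma~\ref{lma.0.5} to the doubling elliptic measure $\omega:=\omega^{A_Q}$ in order to produce boundary data $F=\chi_H$ whose dyadic--martingale oscillation on $E$ has size $\sim\ol k$, and then convert this oscillation into a pointwise lower bound for the square function via the change--of--pole formula and a Poincar\'e estimate at each scale. First, $\omega^{A_Q}$ is doubling on $\R^{n-1}$ (standard, from interior Harnack, the boundary corkscrew property, and the comparison principle, all of which hold for non--symmetric $L$). After an affine rescaling sending $Q$ to $Q_0$, Lemma~\ref{lma.0.5} with a small universal $\varepsilon_0$ yields, for $E\subset Q$ with $\omega(E)<\delta_0$, a good $\varepsilon_0$-cover $\{\Omega_l\}_{l=1}^{k}$ of $E$ with $k\sim\ol k$, cubes $S_i^{(l)}\ni x$ with dyadic children $T_i^{(l)}\ni x$, and a Borel $H\subset Q$ such that $F=\chi_H$ satisfies
\[
\Bigl|\fint_{S_i^{(l)}} F\,d\omega - \fint_{T_i^{(l)}} F\,d\omega\Bigr|\ge \beta_0
\qquad (x\in E,\ 1\le l\le k).
\]

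Let $u$ solve \eqref{eqn.dp} with data $F$. To transfer the dyadic oscillation to the corkscrew values $u(A_{S_i^{(l)}})$ and $u(A_{T_i^{(l)}})$, I would invoke the change--of--pole (Kemper/CFMS) estimate, valid for non--symmetric $L$: $\omega^{A_S}(A)\simeq \omega^{A_Q}(A)/\omega^{A_Q}(S)$ for $A\subset S\subset Q$. Hence $\omega^{A_S}(H\cap S)\simeq \fint_S F\,d\omega$. Writing $u(A_S)=\omega^{A_S}(H\cap S)+\omega^{A_S}(H\setminus S)$, the layered structure $H=\bigcup_{j\ge 2}(\wt{\U}_{j-1}\setminus \U_j)$ from the proof of Lemma~\ref{lma.0.5} together with Harnack and boundary Carleson estimates for the nonnegative solution $X\mapsto \omega^X(H\setminus S)$ (which vanishes on $S$) should yield that the ``tail'' difference $\omega^{A_S}(H\setminus S)-\omega^{A_T}(H\setminus S)$ is $O(\varepsilon_0)$. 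Taking $\varepsilon_0$ sufficiently small then gives $|u(A_{S_i^{(l)}})-u(A_{T_i^{(l)}})|\gtrsim \beta_0$ for every $l=1,\dots,k$.

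For each $l$, select a ball $B_l\subset \Gamma_\gamma^{\gamma\ell(Q)}(x,0)$ of radius $\simeq \ell(S_i^{(l)})$ containing both $A_{S_i^{(l)}}$ and $A_{T_i^{(l)}}$; the strict decrease of $\ell(S_i^{(l)})$ in $l$ makes the $B_l$'s boundedly overlapping, while $|Y-(x,0)|\simeq \ell(S_i^{(l)})$ on $B_l$. Moser's interior $L^2$--$L^\infty$ bound applied to $u-u_{B_l}$ at the two corkscrews, followed by the $L^2$-Poincar\'e inequality on $B_l$, yields
\[
\bigl|u(A_{S_i^{(l)}})-u(A_{T_i^{(l)}})\bigr|^2
\lesssim \ell(S_i^{(l)})^{2-n}\int_{B_l}|\nabla u|^2\,dY
\lesssim \int_{B_l}|\nabla u(Y)|^2|Y-(x,0)|^{2-n}\,dY.
\]
Summing over $l=1,\dots,k$ gives
\[
k\,\beta_0^2 \;\lesssim\; \sum_{l=1}^k \bigl|u(A_{S_i^{(l)}})-u(A_{T_i^{(l)}})\bigr|^2
\;\lesssim\; \bigl(S_\gamma^{\gamma\ell(Q)}(u)(x,0)\bigr)^2,
\]
and since $k\simeq \ol k$ this delivers $S_\gamma^{\gamma\ell(Q)}(u)(x,0)\ge C_1\ol k^{1/2}$.

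The main obstacle is the transfer step. Because the single set $H$ is shared across all scales, $u(A_S)=\omega^{A_S}(H)$ carries an $O(1)$ tail $\omega^{A_S}(H\setminus S)$ that is not small in absolute terms; one must argue that this tail almost cancels in the difference $u(A_S)-u(A_T)$, leaving only the local oscillation of size $\beta_0$. This is precisely what forces the careful layered construction in Lemma~\ref{lma.0.5}---placing the mass of $H$ at well--separated scales, each controlled by $\varepsilon_0$---together with the uniform validity of Harnack, Carleson and change--of--pole estimates for non--symmetric uniformly elliptic operators on Lipschitz domains.
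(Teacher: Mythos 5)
Your overall architecture is close to the paper's: construct $F$ via Lemma~\ref{lma.0.5}, obtain a uniform oscillation of $u$ at a pair of corkscrew points at each scale, and then use Moser/Poincar\'e on boundedly overlapping boxes inside $\Gamma_\gamma^{\gamma\ell(Q)}(x,0)$ to pass to the square function. The Poincar\'e step and the bounded-overlap count match the paper in substance. But the ``transfer step'' as you describe it has two genuine gaps, and the paper does something materially different there.

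First, the change-of-pole estimate $\omega^{A_S}(H\cap S)\simeq \fint_S F\,d\omega$ is only a two-sided \emph{multiplicative} comparison. It does not transfer the difference $\fint_S F\,d\omega-\fint_T F\,d\omega$ to $\omega^{A_S}(H\cap S)-\omega^{A_T}(H\cap T)$: the implicit constants in the CFMS comparison at the two poles $A_S$, $A_T$ need not agree, and the difference can shrink or even change sign. So the conclusion $|u(A_S)-u(A_T)|\gtrsim\beta_0$ does not follow from the output \eqref{eqn.0.30} of Lemma~\ref{lma.0.5} by that route. Second, the tail $\omega^{A_S}(H\setminus S)$ is \emph{not} $O(\varepsilon_0)$ at the standard corkscrew point $A_S=(x_S,\ell(S))$: boundary H\"older gives $\omega^{A_S}(H\setminus S)\lesssim(\operatorname{dist}(A_S,\R^{n-1})/\ell(S))^\beta=1$, which is useless. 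The paper handles both issues by moving to the \emph{low} corkscrew points $A_\eta(S)=(x_S,\tfrac{\eta}{2}\ell(S))$ with $\eta$ a small universal constant. At that height the tail is $O(\eta^\beta)$, and more importantly the kernel $K(A_\eta(S),\cdot)$ is so concentrated on $S$ that $u(A_\eta(S))$ is computed \emph{directly} via \eqref{eqn1.5A}--\eqref{eqn.1.5'}, not transferred from the Lebesgue--dyadic martingale. The role Lemma~\ref{lma.0.4} plays in Lemma~\ref{lma.0.5} is played here by Lemma~\ref{lma.1.6}: $\alpha_1\le\int_{\wt S}K(A_\eta(S),y)\,d\omega(y)\le1-\alpha_1$. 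In other words, Lemma~\ref{lma.1.3} does not cite \eqref{eqn.0.30} at all; it re-runs the two-case analysis (whether $\wh S_i^{(l)}\subset\wt S_i^{(l)}$ or not) with the elliptic-measure analogue of the doubling lower bound substituted in. Your instinct that ``the layered structure forces the tail to cancel'' points in the right direction, but filling it in requires both switching to $A_\eta(S)$ and replacing the change-of-pole comparison with the kernel-function computation and boundary H\"older, exactly as in Lemma~\ref{lma.1.3} and Lemma~\ref{lma.1.6}.
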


The proof of Proposition \ref{prop.1.1} requires several lemmas. To simplify the notation we first assume that $Q=Q_0$. We set $F$ as in \eqref{eqn.0.13}, with $\o=\o^{A_{Q_0}}$ and $H=\left\{x\in \R^{n-1} F(x)=1 \right\}$ as in Lemma \ref{lma.0.5}, so that $F=\chi_H$. 
By the maximum principle $0\leq u\leq 1$. We now use the construction in Section \ref{sec.0}. Note that if $x\in E$ then $x\in \U_l, l=1,\dots,k$. Fix such an $l$, there exists a unique $S_i^{(l)}\ss\U_l$ such that $x\in S_i^{(l)}$ (possibly removing points in $E\cap\cup\s\ss \cup_l\partial\s$ a subset of both $\o$ and Lebesgue measure zero (see Lemma \ref{lma.0.1})). Moreover there exists a unique $\wh S_i^{(l)}$, a child of $S_i^{(l)}$, such that $x\in \wh S_i^{(l)}$.

\begin{lma}\label{lma.1.3}
    There exist $a>0$, $\eta >0$, $\varepsilon_0>0$  and a corresponding $\delta_0>0$ depending only on the ellipticity and the dimension, so that if $E$ is as in Lemma \ref{lma.0.3}, $F$ is a above and $u$ is the solution 
     \eqref{eqn.dp}, with $F=\chi_H$ then
    \begin{equation}
        \left\lvert u\left(A_\eta\left( S_i^{(l)} \right)\right)-u\left( A_\eta \left( \wh S_i^{(l)} \right)\right) \right\rvert \geq a.
        \label{eqn.1.4}
    \end{equation}
    
\end{lma}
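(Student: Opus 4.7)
Setting $\omega=\omega^{A_{Q_0}}$, $S=S_i^{(l)}$, $\widehat S=\widehat S_i^{(l)}$, $X_1=A_\eta(S)$, and $X_2=A_\eta(\widehat S)$, the plan is to reduce the oscillation of $u$ between $X_1$ and $X_2$ to the oscillation of $F$-averages delivered by Lemma~\ref{lma.0.5}. The first step is to show that the elliptic measures $\omega^{X_j}$ concentrate essentially on the underlying cube. The function $Y\mapsto \omega^Y(S^c)$ is $L$-harmonic, bounded by $1$, and vanishes on $S$, so the boundary H\"older regularity for non-symmetric divergence-form operators applied at $X_1$ (which sits at height $\eta\ell(S)/2$ well inside the half-ball of radius $\ell(S)/2$ above $x_S$ once $\eta\le 1/2$) yields $\omega^{X_1}(S^c)\le C\eta^\alpha$ for some $\alpha>0$ depending only on ellipticity; analogously $\omega^{X_2}(\widehat S^c)\le C\eta^\alpha$. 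Since $0\le F\le 1$, this gives
\[
u(X_1)=\int_S F\,d\omega^{X_1}+O(\eta^\alpha),\qquad u(X_2)=\int_{\widehat S} F\,d\omega^{X_2}+O(\eta^\alpha).
\]

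Next, I invoke the change-of-pole formula available for non-symmetric $L$: for $E\subset S$, the ratio $\omega^{X_1}(E)/\omega^{X_1}(S)$ is comparable to $\omega(E)/\omega(S)$ with multiplicative constants in a range $[c_0,C_0]$ depending only on ellipticity and dimension, and the analogous statement holds on $\widehat S$ with pole $X_2$. Consequently, for any $g\colon S\to[0,1]$ with $\fint_S g\,d\omega=p$, one has $\fint_S g\,d\omega^{X_1}\in[c_0p,C_0p]$, and similarly on $\widehat S$.

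To upgrade this multiplicative comparison to the required additive lower bound, I exploit the explicit structure of $F$ extracted from the proof of Lemma~\ref{lma.0.5}: $\fint_S F\,d\omega\in[\alpha_0-O(\varepsilon_0),\,1-\alpha_0+O(\varepsilon_0)]$, while $\fint_{\widehat S} F\,d\omega$ is either $\ge 1-O(\varepsilon_0)$ (case $\widehat S\subset \widetilde S_i^{(l)}$) or $\le O(\varepsilon_0)$ (case $\widehat S\subset S\setminus \widetilde S_i^{(l)}$). In the first case I apply the change of pole to the complementary function $1-F$, which has small average on $\widehat S$, to obtain $\fint_{\widehat S} F\,d\omega^{X_2}\ge 1-O(\varepsilon_0)$; applying it to $1-F$ on $S$, whose $\omega$-average is $\ge \alpha_0-O(\varepsilon_0)$, gives $\fint_S F\,d\omega^{X_1}\le 1-c_0\alpha_0+O(\varepsilon_0)$. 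In the second case the roles of $F$ and $1-F$ are reversed. Either way,
\[
|u(X_1)-u(X_2)|\ge c_0\alpha_0-O(\varepsilon_0)-O(\eta^\alpha),
\]
and choosing $\eta$ and $\varepsilon_0$ sufficiently small, depending only on ellipticity and dimension, so that both error terms lie below $c_0\alpha_0/2$ yields the desired estimate with $a:=c_0\alpha_0/2$.

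The main obstacle is the transition from a multiplicative to an additive estimate in the change-of-pole step: since $c_0$ and $C_0$ can be far from $1$, multiplicatively close quantities are not in general additively close unless one of them is forced to be near $0$ or $1$. Lemma~\ref{lma.0.5} supplies exactly this dichotomy on $\widehat S$, and the trick of playing $F$ against $1-F$ lets the multiplicative change of pole be used additively in either case.
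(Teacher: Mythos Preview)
Your overall architecture is sound and closely parallels the paper's: localise via boundary H\"older, then use the explicit dichotomy on $\widehat S$ (the averages are near $0$ or near $1$) to extract an additive gap. The gap in your argument lies in the change-of-pole step. You assert that for $E\subset S$ the ratio $\omega^{X_1}(E)/\omega^{X_1}(S)$ is comparable to $\omega(E)/\omega(S)$ with constants $c_0,C_0$ depending only on ellipticity and dimension. This is not true: the pole $X_1=A_\eta(S)$ sits at height $\tfrac{\eta}{2}\ell(S)$, and the standard comparison principle only gives $\eta$-independent constants for poles \emph{outside} the Carleson box $T(S)$ (where both $\omega^X(E)$ and $\omega^X(S)$ vanish on the same boundary piece). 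Inside $T(S)$ one must move from $A_S$ to $A_\eta(S)$ by a Harnack chain of length $\sim\log(1/\eta)$, so the comparison constants degenerate like $c_0(\eta)\sim\eta^{c}$ for some $c>0$. (Concretely: for the Laplacian on $\mathbb R^2_+$, $S=[-1,1]$ and $E=[1/2,1]$, one has $\omega^{(0,\eta/2)}(E)\to 0$ as $\eta\to 0$ while $\omega(E)/\omega(S)$ stays fixed and positive.) The paper itself records this $\eta$-dependence in \eqref{eqn.1.5}.

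This is fatal for your final inequality $|u(X_1)-u(X_2)|\ge c_0\alpha_0 - O(\varepsilon_0)-O(\eta^\alpha)$: you now need $c_0(\eta)\alpha_0 > C\eta^\alpha$, and there is no reason the Harnack-chain exponent $c$ should be smaller than the boundary H\"older exponent $\alpha$. The paper avoids this circularity through Lemma~\ref{lma.1.6}, which proves \emph{directly} that $\omega^{A_\eta(S)}(\widetilde S)\in[\alpha_1,1-\alpha_1]$ uniformly for $0<\eta<\eta_0$, by choosing balls $B_1\subset\widetilde S$ and $B_2\subset S\setminus\widetilde S$ of radius $\sim\eta$ near the centre (Remark~\ref{rmk0.1}), applying boundary H\"older at scale $\eta$, and then using a Harnack chain of \emph{bounded} length (independent of $\eta$) to reach $A_\eta(S)$. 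Your argument can be repaired by inserting exactly this lemma in place of the change-of-pole claim on $S$; the dichotomy argument on $\widehat S$ (where the averages are genuinely near $0$ or $1$ and $\eta$-dependent constants can be absorbed into $\varepsilon_0$) is fine as written.
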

\begin{proof}
    Let $K\left(X,(y,0)\right)=K\left( X,y \right), X\in \R_+^n,\ y\in \R^{n-1}$ be the kernel function (see \cite{KKPT}, (1.15)). Then,
    \begin{align}\label{eqn1.5A}
        u\left( A_\eta (S_i^{(l)}) \right)&=\int_{S_i^{(l)}}K\left( A_\eta(S_i^{(l)}),y \right)F(y)d\o(y)\\
        &+ \int_{^cS_i^{(l)}}K\left( A_\eta (S_i^{(l)}),y \right)F(y)d\o(y).\nonumber
    \end{align}
    We  first estimate the second term. Let $r_{i,l}=\ell(S_i^{(l)})$. By the boundary H\"older estimate ((1.9) in \cite{KKPT}), since $\lvert A_\eta(S_i^{(l)})-x_{S_i^{(l)}}\rvert = \frac{\eta}{2}r_{i,l}$,
    we obtain that
    \begin{equation}\label{eqn1.5B}
        \int_{^cS_i^{(l)}}K\left( A_\eta\left( S_i^{(l)} \right),y \right)F(y)d\o(y) \leq C\left[ \frac{\lvert A_\eta(S_i^{(l)})-x_{S_i^{(l)}}|}{r_{i,l}} \right]^\beta\le C\eta^\beta
    \end{equation}
    where $C$ and $ \beta$ depend only on the ellipticity and $n$. To handle the first term, we recall from \cite{KKPT} (1.15) and the doubling property of $\o$ (a consequence of (1.13) in \cite{KKPT} and the Harnack principle (1.16) in \cite{KKPT}), that
    \begin{equation}
        K\left( A_\eta(S_i^{(l)}),y \right)\simeq \frac{1}{\o(S_i^{(l)})}, \text{ for }y\in S_i^{(l)}
        \label{eqn.1.5}
    \end{equation}
    with comparability constants depending on the dimension, the ellipticity and $\eta$.

     From the definition of $F$, the first term equals:
    \begin{align*}
        \tag{I}\label{eqn.I} &\sum_{j=2}^l \int_{S_i^{(l)}}K\left( A_\eta ( S_i^{(l)} ),y \right)\chi_{\wt \U_{j-1}\bs \U_j}(y)d\o(y)\\
        \tag{II}\label{eqn.II} +&\sum_{j=l+2}^k \int_{S_i^{(l)}}K\left( A_\eta ( S_i^{(l)} ),y \right)\chi_{\wt \U_{j-1}\bs \U_j}(y)d\o(y)\\
        \tag{III}\label{eqn.III} +&\int_{S_i^{(l)}}K\left( A_\eta ( S_i^{(l)} ),y \right)\chi_{\wt \U_{l}\bs \U_{l+1}}(y)d\o(y)\\
        &= \text{\ref{eqn.I} + \ref{eqn.II} + \ref{eqn.III}}
    \end{align*}
    Since $S_i^{(l)}\ss \U_l\ss \U_j$ for $ 2\leq j\leq l$ then \ref{eqn.I}$=0$. To estimate \ref{eqn.II}, for $l+2\leq j\leq k$, note that by Lemma \ref{lma.0.2}
    \begin{eqnarray}\label{est2}
    \o\left( ( \wt \U_{j-1}\bs \U_j )\cap S_i^{(l)} \right) &\leq& \o\left( \wt \U_{j-1}\cap S_i^{(l)} \right)\\
    &\leq &\o(\U_{j-1}\cap\s)\leq\varepsilon_0^{j-1-l}\o\left( S_i^j \right).
    \end{eqnarray}
    
     Hence for $\varepsilon_0$ small, using \eqref{eqn.1.5} and \eqref{est2} we have
     \begin{eqnarray}\label{eqn1.5C}
     \ref{eqn.II}\leq C(\eta)\sum_{j=l+2}^k \varepsilon_0^{j-1-l}\leq \wt C(\eta)\varepsilon_0.
     \end{eqnarray}
      Thus by \eqref{eqn1.5A}, \eqref{eqn1.5B}, \eqref{eqn.I}, \eqref{eqn.II}, \eqref{eqn.III} and \eqref{eqn1.5C} we have
    \begin{equation}\label{eqn1.5D}
        \left\lvert u\left(A_\eta ( S_i^{(l)} )\right)-\int_{S_i^{(l)}}K\left( A_\eta( S_i^{(l)} ),y \right)\chi_{\wt \U_l\bs \U_{l+1}}(y)d\o(y)\right\rvert \leq C\eta^\beta+\wt C(\eta)\varepsilon_0.
    \end{equation}

    We next consider the second term inside the absolute value. Recall from Section \ref{sec.0} \eqref{ppt-u} that $\wt S_i^{(l)}\ss S_i^{(l)}\ss \U_l$ and $ \U_{l+1}\ss \U_l$. Then $S_i^{(l)}\cap \left( \wt \U_l\bs \U_{l+1} \right) = \wt S_i^{(l)} \bs \left( \wt S_i^{(l)} \cap \U_{l+1} \right)$. Then
    \begin{eqnarray}\label{eqn1.5E}
    \int_{S_i^{(l)}}K\left( A_\eta( S_i^{(l)} ),y \right)\chi_{\wt \U_l\bs \U_{l+1}}(y)d\o(y)&=&\nonumber\\
    \int_{\wt S_i^{(l)}}K\left( A_\eta \left( S_i^{(l)} \right),y \right)d\o(y) &-& \int_{\wt S_i^{(l)}\cap \O_{l+1}}K\left( A_\eta \left( S_i^{(l)} \right),y \right)d\o(y).
    \end{eqnarray}
     Arguing as in the treatment of \ref{eqn.II} above (see \eqref{eqn.1.5}), we see by \eqref{u-small} that
    \begin{equation}\label{eqn1.5''}
  \left| \int_{\wt S_i^{(l)}\cap \U_{l+1}}K\left( A_\eta \left( S_i^{(l)} \right),y \right)d\o(y)\right|\leq  \frac{C(\eta)}{\o(S_i^{(l)})}\cdot \o\left( \wt S_i^{(l)}\cap \U_{l+1} \right)\leq C(\eta)\varepsilon_0.
 \end{equation}
  In order to further analyze the first term we use Lemma \ref{lma.1.6} below, which says that there exist $\eta_0$ small, and $\alpha_1, 0<\alpha_1<1$, depending only on ellipticity and dimension, such that, for all $0<\eta<\eta_0$, we have
    \begin{equation}
        \alpha_1\leq \int_{\wt S_i^{(l)}}K\left( A_\eta\left( S_i^{(l)} \right),y \right)d\o(y) \leq 1-\alpha_1.
        \label{eqn.1.5'}
    \end{equation}
    We continue with the proof of Lemma \ref{lma.1.3} assuming that \eqref{eqn.1.5'} holds. Using \eqref{eqn1.5D}, \eqref{eqn1.5E}, \eqref{eqn1.5''} and \eqref{eqn.1.5'} we obtain
    \begin{equation}\label{eqn1.5F}
        u\left( A_\eta \left( S_i^{(l)} \right) \right)=\int_{\wt S_i^{(l)}} K\left( A_\eta \left( S_i^{(l)} \right),y \right)d\o(y)+e_1,
    \end{equation}
    where $|e_1|\leq C_1\eta^\beta+\wt C_1(\eta)\varepsilon_0$. A similar argument shows that
    \begin{equation}\label{eqn1.5G}
        u\left( A_\eta \left( \wh S_i^{(l)} \right) \right)=\int_{\widehat S_i^{(l)}}K\left( A_\eta \left( \wh S_i^{(l)} \right),y \right)\chi_{\wt \U_l\bs \U_{l+1}}(y)d\o(y)+e_2,
    \end{equation}
    where $\lvert e_2\rvert \leq C_2\eta^\beta+\wt C_2(\eta)\varepsilon_0$.

    Next, with $\alpha_1, \eta_0$ as in \eqref{eqn.1.5'} above, choose $\eta_1\leq \eta_0$ so small that if $0<\eta\leq \eta_1$, $C_1\eta^\beta\leq \frac{\alpha_1}{2}$, and $C_2\eta^\beta\leq \frac{\alpha_1}{16}$. We first consider the quantity $\int_{\wh S_i^{(l)}}K\left( A_\eta\left( \widehat S_i^{(l)} \right),y \right)d\o(y)$. Applying the boundary H\"older estimate ((1.9) in \cite{KKPT}) to the solution $1-\int_{\wh S_i^{(l)}}K(X,y)d\o(y)$ in the box $T\left(\wh S_i^{(l)}\right)$, we obtain that 
 \begin{equation}\label{eqn1.5H}
 \int_{\wh S_i^{(l)}}K\left( A_\eta \left( \wh S_i^{(l)},y \right)d\o(y) \right)=1-e_3,
 \end{equation}
  where $0\leq e_3\leq C_3\eta^\beta$. Choose now $\eta_2\leq \eta_1$ so that for $0<\eta_2 \leq \eta_1$, we have $0\le e_3\leq \frac{\alpha_1}{32}$. We next turn to estimating
  \begin{equation}\label{eqn1.5I}
  \int_{\wh S_i^{(l)}}K\left( A_\eta \left( S_i^{(l)} \right),y \right)\chi_{\widetilde \U_l\bs \U_{l+1}}(y) d\o(y).
  \end{equation}

    Note that $\wh S_i^{(l)}\cap \,\widetilde\U_l\bs \U_{l+1}\subset\wh S_i^{(l)}\cap \wt S_i^{(l)}\cap\,\U_{l+1}^C=\left[ \wh S_i^{(l)}\cap \wt S_i^{(l)} \right] \bs \left[ \wh S_i^{(l)}\cap \wt S_i^{(l)}\cap \U_{l+1} \right]$. We consider two cases:

\bigskip
{\bf Case 1:} $\wh S_i^{(l)}\cap \wt S_i^{(l)}=\emptyset$, that is  $\wh S_i^{(l)}$ was not one of the children chosen to form $\widetilde S_i^{(l)}$. In this case the integral in \eqref{eqn1.5I} is $0$ and hence by \eqref{eqn1.5G}
            \begin{equation}\label{eqn1.5J}
                u\left( A_\eta( \wh S_i^{(l)} ) \right)=e_2.
            \end{equation}

{\bf Case 2:} $\wh S_i^{(l)}\cap \wt S_i^{(l)}=\wh S_i^{(l)}$, that is  $\wh S_i^{(l)}$ was chosen to form $\wt S_i^{(l)}$. In this case, we have by \eqref{eqn1.5G}, \eqref{eqn1.5H}, \eqref{eqn.1.5} and (iii) in Definition \ref{dfn.0.1} that
            \begin{align}\label{eqn1.5K}
                u\left( A_\eta( \wh S_i^{(l)} ) \right)&= \int_{\wh S_i^{(l)}}K\left( A_\eta( \wh S_i^{(l)} ),y \right)\chi_{\wt \U_l\bs \U_{l+1}}(y)d\o(y)+e_2\nonumber\\
                &= \int_{\wh S_i^{(l)}}K\left( A_\eta( \wh S_i^{(l)}),y \right)d\o(y) - \int_{\wh S_i^{(l)}\cap\, \U_{l+1}}K\left( A_\eta( \wh S_i^{(l)} ),y \right)d\o(y)+e_2\nonumber\\
                &= 1-e_3+\wt C_3(\eta)\varepsilon_0+e_2.
            \end{align}
 In order to control $e_1$, $e_2$, $e_3$ and the additional error in \eqref{eqn1.5K} choose $\varepsilon_0$ small enough so that
            \begin{equation*}
                \lvert \wt C_1(\eta_2)\varepsilon_0\rvert \leq \frac{\alpha_1}{4}\qquad \lvert \wt C_2(\eta_2)\varepsilon_0\rvert \leq \frac{\alpha_1}{16}\qquad \lvert \wt C_3\left( \eta_2 \right)\varepsilon_0\rvert \leq \frac{\alpha_1}{32},
            \end{equation*}
            and from now on fix $\eta=\eta_2$ and $\varepsilon_0$. Note that the choice of $\eta_2, \varepsilon_0$ depends only on ellipticity and dimension.  Thus \eqref{eqn1.5F} becomes
     \begin{equation}\label{eqn1.5F1}
       \left| u\left( A_{\eta_2} ( S_i^{(l)} ) \right)-\int_{\wt S_i^{(l)}} K( A_{\eta_2}\left( S_i^{(l)} ),y \right)d\o(y)\right|\leq |e_1|\le C_1\eta_2^\beta + \wt C_1(\eta_2)\varepsilon_0\le \frac{\alpha_1}{2} +\frac{\alpha_1}{4}
    \end{equation}
    which combined with \eqref{eqn.1.5'} yields
   \begin{equation}\label{eqn1.5L}
 \frac{\alpha_1}{4}\leq  u\left( A_{\eta_2} ( S_i^{(l)} ) \right)\leq 1-\frac{\alpha_1}{4}.
   \end{equation}

    Consider now $u\left( A_{\eta_2}(\wh S_i^{(l)} ) \right)$. In Case 1, we show that $u\left( A_{\eta_2}( \wh S_i^{(l)}) \right)=e_2$ where $\lvert e_2\rvert \leq C_2\eta_2^\beta+\lvert \wt C_2( \eta_2 )\varepsilon_0\rvert \leq \frac{\alpha_1}{16}+\frac{\alpha_1}{16}=\frac{\alpha_1}{8}$. In Case 2, we have $u\left( A_{\eta_2}( \wh S_i^{(l)} ) \right)=1-e_3+\wt C_3(\eta_2)\varepsilon_0+e_2$, and $0\leq e_3\leq \frac{\alpha_1}{32}$, $\lvert e_2\rvert \leq \frac{\alpha_1}{16}+\frac{\alpha_1}{16}, \lvert\wt C_3(\eta_2)\varepsilon_0\rvert \leq \frac{\alpha_1}{32}$. Thus in Case 2, we have:
    \begin{equation}\label{eqn1.5M}
        1-\frac{\alpha_1}{16}-\frac{\alpha_1}{8}\leq u\left( A_{\eta_2}( \wh S_i^{(l)}  ) \right)\leq 1.
    \end{equation}
  Hence, in Case 1 by \eqref{eqn1.5J} and \eqref{eqn1.5L} we have
\begin{equation*}  
 u\left( A_{\eta_2}( S_i^{(l)}  \right) )-u\left( A_{\eta_2}( \wh S_i^{(l)}) \right)\geq \frac{\alpha_1}{4}-\frac{\alpha_1}{8}\geq \frac{\alpha_1}{8},
 \end{equation*}
 while in Case 2 by \eqref{eqn1.5K} and \eqref{eqn1.5L} we have
    \begin{equation*}
        u\left( A_{\eta_2}( \wh  S_i^{(l)}) \right)-u\left( A_{\eta_2}( S_i^{(l)}) \right)\geq 1-\frac{\alpha_1}{16}-\frac{\alpha_1}{8}-1+\frac{\alpha_1}{4}=\frac{\alpha_1}{16}, 
    \end{equation*}
    and Lemma \ref{lma.1.3} follows (once we have established Lemma \ref{lma.1.6} below).

\end{proof}

\begin{lma}\label{lma.1.6}
    There exist $\eta_0>0$, and $\alpha_1\in (0,1)$, both depending only on the ellipticity and the dimension, so that, with the notations above, we have for all $0<\eta<\eta_0$,
    \begin{equation*}
        \alpha_1\leq \int_{\wt S_i^{(l)}}K\left( A_\eta(S_i^{(l)}),y \right)d\o(y)\leq 1-\alpha_1.
    \end{equation*}
\end{lma}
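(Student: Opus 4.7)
The strategy is to reinterpret the integral on the left-hand side as an elliptic measure and then invoke the standard Bourgain-type nondegeneracy estimate at a nontangential point. Since $K(X,y)=\frac{d\o^X}{d\o^{A_{Q_0}}}(y)$ by construction of the kernel function (see (1.15) in \cite{KKPT}), integrating against $d\o=d\o^{A_{Q_0}}$ gives
\[
    \int_{\wt S_i^{(l)}}K\bigl( A_\eta(\s),y \bigr)\,d\o(y)=\o^{A_\eta(\s)}\bigl(\wt S_i^{(l)}\bigr).
\]
Thus the lemma reduces to the two-sided bound $\alpha_1\le \o^{A_\eta(\s)}(\wt S_i^{(l)})\le 1-\alpha_1$, and it suffices to exhibit, inside both $\wt S_i^{(l)}$ and $\s\bs \wt S_i^{(l)}$, a boundary ball whose elliptic measure from $A_\eta(\s)$ is uniformly bounded below.

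To produce these balls, I would apply Remark \ref{rmk0.1} with parameter $r=c_0\eta$, where $c_0=c_0(n)$ is a small dimensional constant fixed below; this is admissible provided $c_0\eta<r_0(n)$, so we set $\eta_0:=r_0(n)/c_0$. The remark then furnishes boundary balls $B_1\subset \wt S_i^{(l)}$ and $B_2\subset \s\bs \wt S_i^{(l)}$, each of radius $\rho:=c_0\eta\,\ell(\s)$, with $\mathrm{dist}(x_{\s},B_j)\simeq \rho$, comparison constants depending only on $n$. The pole $A_\eta(\s)=(x_{\s},\tfrac{\eta}{2}\ell(\s))$ then sits at horizontal distance $\simeq \rho$ from each $B_j$ and at height $\tfrac{\eta}{2}\ell(\s)=\tfrac{1}{2c_0}\rho$ above the boundary, so upon choosing $c_0$ small enough (in terms of $n$), $A_\eta(\s)$ becomes a nontangential point relative to both balls with nontangential aperture controlled by dimension alone.

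The classical nondegeneracy of elliptic measure for real (possibly nonsymmetric) divergence form operators --- a consequence of Bourgain's argument in the form established throughout Section 1 of \cite{KKPT} --- then yields $\o^{A_\eta(\s)}(B_j)\ge \alpha_1$ for $j=1,2$, with $\alpha_1>0$ depending only on ellipticity and dimension. Since $B_1\subset \wt S_i^{(l)}$ and $B_2\subset \s\bs \wt S_i^{(l)}$ are disjoint, these bounds give respectively $\o^{A_\eta(\s)}(\wt S_i^{(l)})\ge \alpha_1$ and $\o^{A_\eta(\s)}(\wt S_i^{(l)})\le 1-\o^{A_\eta(\s)}(B_2)\le 1-\alpha_1$, as desired. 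The only point requiring real attention is the calibration of $c_0$: one must check that the three scales --- radius of the ball, horizontal distance from pole to ball, and depth of the pole --- are all comparable to $\rho$, so that the nontangential admissibility constants feeding into the Bourgain estimate are universal. This is the main (and mild) obstacle; it is a purely geometric verification independent of the operator $L$.
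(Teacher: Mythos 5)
Your proposal follows the paper's route at every essential step: both recognize that $\int_{\wt S_i^{(l)}}K(A_\eta(\s),y)\,d\o(y) = \o^{A_\eta(\s)}(\wt S_i^{(l)})$, both use Remark \ref{rmk0.1} to produce balls $B_1 \subset \wt S_i^{(l)}$ and $B_2 \subset \s\setminus\wt S_i^{(l)}$ at scale comparable to the depth of the pole, and both conclude via nondegeneracy of elliptic measure. The one substantive difference is that the paper does not cite a packaged Bourgain estimate; it re-derives the lower bound by applying the boundary H\"older estimate (1.9 of \cite{KKPT}) to $1-u$ in $T(B_1)$ at the near-boundary point $(x_1,\eta/M)$, and then Harnacking up to $A_\eta$ through $O(M)$ balls. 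Citing a Bourgain-type lemma directly is a legitimate shortcut, provided it is on record in exactly the form you need.

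The one genuine flaw is the calibration of $c_0$, and it is a self-contradiction in your write-up rather than a cosmetic choice. You state, correctly, in the final paragraph that the radius of $B_j$, the horizontal offset of the pole, and the depth of the pole must all be mutually comparable to $\rho$ for the nondegeneracy estimate to apply at $A_\eta(\s)$ with universal constants. But you earlier propose taking $c_0$ ``small enough.'' With $\rho = c_0\eta\,\ell(\s)$ and depth $= \tfrac{\eta}{2}\ell(\s) = \rho/(2c_0)$, shrinking $c_0$ makes the depth \emph{much larger} than $\rho$, not comparable to it; the pole is then a corkscrew point for a surface ball of radius $\simeq\rho/c_0$, not for $B_j$, and a direct application of the nondegeneracy estimate at $A_\eta(\s)$ bounds the measure of the wrong ball. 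To recover the bound for $B_j$ one would need an additional Harnack chain of length $O(\log(1/c_0))$ lifting a genuine corkscrew point of $B_j$ (at depth $\simeq\rho$) up to $A_\eta(\s)$, losing a factor $c_0^{O(1)}$. This is salvageable because $c_0$ is a fixed dimensional constant, but it is extra work and contradicts your own scale-matching requirement. The natural and cleaner choice --- the one the paper implicitly makes by applying Remark \ref{rmk0.1} with $r = \eta$ --- is $c_0$ of order one, so that all three scales coincide and the estimate applies at $A_\eta(\s)$ directly. Replace ``small enough'' by ``a fixed dimensional constant of order one'' and the argument is sound.
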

\begin{proof}
    By translation and scaling, we can assume $\s = Q_0$, $\wt S_i^{(l)}=\wt Q_0$, $ A_\eta=A_\eta(Q_0)=(x_{Q_0}, \eta/2)$ (see \eqref{TSeta}), and prove that if $u$ is the solution to
    \begin{equation*}
        \left\{
        \begin{aligned}
            Lu&= 0 \text{ in }\R_+^n\\
            u\vert_{ \R^{n-1}}&= \chi_{\wt Q_0}
        \end{aligned}
        \right.
    \end{equation*}
  it satisfies $\alpha_1\leq u(A_\eta)\leq 1-\alpha_1$ for $0<\eta\leq \eta_0$. Consider $0<\eta<\eta_0$, $\eta_0\leq r_0(n)<1$, the constant in Remark \ref{rmk0.1}. Consider the ball $B_1\ss \wt Q_0$ with radius $\eta$, as in Remark \ref{rmk0.1}. Let $x_1$ be the center of $B_1$, and let $A=\left( x_1,\frac{\eta}{M} \right)$, where $M$ will be chosen depending on the ellipticity and the dimension. Let $v(y,t)=1-u(y,t)$, so that $0\leq v\leq 1$. We recall that $\lvert x_1-x_{Q_0}\rvert \simeq \eta$, with comparability constants depending only on dimension. Apply the boundary H\"older continuity estimate  ((1.9) in \cite{KKPT}) to $v$ in the region $T(B_1)$ (see \eqref{TS}). Then,
    \begin{equation*}
        v(A)\leq C\left( \frac{\lvert A-(x_1,0)\rvert}{\eta} \right)^\beta\leq C\left( \frac{1}{M} \right)^\beta.
    \end{equation*}
    We now choose $M$, depending on ellipticity and dimension, so that $C\left( \frac{1}{M} \right)^\beta\leq \frac{1}{2}$. Then, $u(A)\geq \frac{1}{2}$. Using Harnack's principle ( (1.6) in \cite{KKPT}) on a chain of at most $2M$ balls of radius $\frac{\eta}{M}$, we see that $u(x,\eta/2)\geq C\frac{M}{2}$. Hence, since $\lvert x_1-x_{Q_0}\rvert \simeq \eta$, applying Harnack's principle once again, we obtain, that
    \begin{equation*}
        u(x_{Q_0},\eta/2)\geq \frac{\wt C^M}{2},
    \end{equation*}
    which is the desired lower bound. For the upper bound, we consider $u$ in the ball $B_2\ss Q_0\bs \wt Q_0$, with center $x_2$ (see Remark \ref{rmk0.1}). A similar argument, applied to $u$ instead of $v$ yields $u\left( x_2, {\eta}/{2} \right) \leq \frac{1}{2}$, or $v\left( x_2,{\eta}/2 \right)\geq \frac{1}{2}$. Applying Harnack to $v$, in a similar way, gives $v\left( x_{Q_0},\eta/2 \right)\geq \frac{\wt C^M}{2}$ or $1-\frac{\wt C^M}{2}\geq u(x_{Q_0},\eta/2)$, which concludes the proof.
\end{proof}

    We now turn to the proof of Proposition \ref{prop.1.1}. First note that by translation and scale invariance of the hypothesis and the conclusion, it suffices to prove it for the case when $Q=Q_0$.

    \begin{proof}[Proof of Proposition \ref{prop.1.1}]
        Let
        \begin{equation}\label{defq}
            Q_{\eta,i}^l=\left\{ (y,t): \lVert y-x_i^l\rVert \leq \frac{1}{2}r_{i,l}+\frac{\eta}{4}r_{i,l},\  \frac{\eta}{8}r_{i,l}\leq t\leq r_{i,l}+\frac{\eta}{4}r_{i,l} \right\},
        \end{equation}
        where $r_{i,l}=\ell\left( \s \right)$ and $x_i^l$ is the center of $\s$. Let $\eta$ be as in Lemma \ref{lma.1.3}. Let $c_i^l=\fint_{Q_{\eta,i}^l}u$. Using local $L^\infty$ estimates (see (1.4), (1.5) in \cite{KKPT}), Poincar\'e's inequality and the fact that for $(x,t) \in  Q_{\eta,i}^l$, $t\sim r_{i,l}$ then for $u$ as in Lemma \ref{lma.1.3}, we have
        \begin{align}\label{eqn1.6}
            \left\lvert u\left(A_\eta\left( \s \right)\right)-u\left( A_\eta\left(\wh S_i^{(l)} \right)\right)\right\rvert &\leq \left\lvert u\left( A_\eta \left(\s\right) \right) - c_i^l\right\rvert + \left\lvert u\left( A_\eta \left( \wh S_i^{(l)}\right) \right)-c_i^l\right\rvert\nonumber\\
            &\leq C\left( \fint_{Q_{\eta,i}^l} \left\lvert u-c_i^l\right\rvert^2 \right)^{\frac12} + C\left( \fint_{Q_{\eta,i}^l} \left\lvert c-c_i^l\right\rvert^2 \right)^{\frac12}\nonumber \\
            &\leq Cr_{i,l}\left( \fint_{Q_{\eta,i}^l}\lvert \nabla u\rvert^2 \right)^{\frac12}\nonumber\\
            &\leq C\left( \int_{Q_{\eta,i}^l}\lvert \nabla u\rvert^2 \frac{dy dt}{t^{n-2}} \right)^{\frac12}.
        \end{align}
         Here all the constants $C$ above depend only on ellipticity and dimension. If we now apply Lemma \ref{lma.1.3}, \eqref{eqn1.6} yields
        \begin{equation}
            \left( \int_{Q_{\eta,i}^l}\lvert \nabla u\rvert^2 dy\frac{dt}{t^{n-2}} \right) \geq \alpha_0,
            \label{eqn.1.7}
        \end{equation}
        where $\alpha_0$ depends only on the ellipticity and the dimension.

        Note that if $x\in E$, for each $l$ there exists $i$ such that $x\in S_i^{(l)}$, and so, for all $(y,t)\in Q_{\eta,i}^l, \lvert x-y\rvert +t\simeq r_{i,l}\simeq t$, with comparability constants depending only on ellipticity and dimension (since $\eta$ only on the ellipticity and the dimension). Thus, there exists $\gamma$, depending only on ellipticity and dimension, so that $Q_{\eta,i}^l\ss \Gamma_\gamma^\gamma(x,0)$. Also, if $x\in E, x\in \s$ and $x\in S_j^{(l')}$, with $l'<l$, then $2^{l-l'}\ell( S_j^{(l)} )\leq \ell( S_j^{(l')}  )$, by the proper containment of $\s$ into $S_{i'}^{(l-1)}$, remarked in the statement after (iii) in Definition \ref{dfn.0.1}. 
        Note that by choosing $m\in\N$ such that $2^m>\frac{8}{\eta} \left( 1+\frac{\eta}{4} \right)$ then if $l-l'\ge m$ we have $Q_{\eta,i}^l \cap Q_{\eta,j}^{l'}=\emptyset$. In fact in this case
        $( 1+\frac{\eta}{4})r_{i,l}< 2^{-(l-l')}( 1+\frac{\eta}{4})r_{i,l'}\le 2{-m}( 1+\frac{\eta}{4})r_{i,l'}\le\frac{\eta}{8}r_{j,l'}$ (see \eqref{defq}).

        Using this fact, and adding in $l$, we obtain
        \begin{equation*}
            \int_{\Gamma_\gamma^\gamma(x,0)}\lvert \nabla u(y,t)\rvert^2 \frac{dydt}{t^{n-2}} \geq \frac{a_0k}{m},
        \end{equation*}
        $k\sim  \frac{\log \o^{A_{Q_0}}}{\log \varepsilon_0}$ is as in Lemma \ref{lma.0.3}, and $\varepsilon_0$ a in Lemma \ref{lma.1.3}. Proposition \ref{prop.1.1} follows.
    \end{proof}

    We next give an extension of Proposition \ref{prop.1.1} to the domain above the graph of a Lipschitz function 
   \begin{equation}\label{lip}
   D_\varphi = \left\{ (x,t): \\t>\varphi(x), x\in \R^{n-1}, \varphi:\R^{n-1}\to \R, \quad\lVert \nabla \varphi\rVert_\infty = M \right\}.
   \end{equation}  
   If $Y=(x,\varphi(x))\in \partial D_\varphi$, we define the nontangential region 
   \begin{equation*}
   \wt \Gamma_\gamma(Y)=\left\{ x\in D_\varphi: \lvert X-Y\rvert\leq (1+\gamma)\,\text{dist}\left( X,\partial D_\varphi \right) \right\},
   \end{equation*}
    where $\gamma>0$. Note that if $X\in \wt \Gamma_\gamma(Y)$, then dist $(X, \partial D_\varphi)\simeq \lvert X-Y\rvert$, with comparability constant depending only on $\gamma$. We also define truncated non-tangential regions $\wt \Gamma_\gamma^\gamma (Y)$ as follows $\wt \Gamma_\gamma^\tau (Y)=\wt \Gamma^\gamma (Y)\cap B(Y,\tau)$. Note that the bi-Lipschitz change of variable 
    \begin{equation}\label{lip-dom}
    \Phi: \ol D_\varphi\to \ol \R_+^n\hbox{   given by   } \Phi(x,t)= (x,t-\varphi(x))
    \end{equation} 
    preserves the class of operators $L$ under consideration as well as the non-tangential regions, thus we obtain the following result.
   
    \begin{cor}\label{cor.1.8}
    Given $D_\varphi\subset \R^n$ as above there are positive constants
        $\delta_0$, $\gamma>0$, and  $C_1$ depending only on ellipticity, dimension and $M$ the Lipschitz constant of $\varphi$ such that for any
    cube $Q\ss \R^{n-1}$ if $\Delta=\Phi^{-1}(Q)$, $A_\Delta=\Phi^{-1}(A_Q)$ and $E\ss \Delta$ is a Borel set with $\o(E)<\delta_0$, where $\o=\o^{A_\Delta}$ is the elliptic measure associated to $L$ in $D_\varphi$ then there exists a Borel set $H\ss \Delta$ such that the solution $u$ of
    \begin{equation*}
         \tag{$DP_{D_\varphi}$}
        \label{eqn.dp'}
        \left\{
        \begin{aligned}
            Lu&= 0 \text{ in }D_\varphi\\
            u\vert_{\partial D_\varphi}&= \chi_H
        \end{aligned}
        \right.
    \end{equation*}
     satisfies
    \begin{equation}
        C_1\left( \log \frac{1}{\o(E)} \right)^{\frac12}\leq A_\gamma^{\gamma \ell(\Delta)}(u) (x,\varphi(x))\hbox{   for all    }(x,\varphi(x))\in E. 
        \label{eqn.1.9}
    \end{equation}
     Here $\ell(\Delta)=\ell(Q)$ by definition and 
    \begin{equation*}
        A_\gamma^{\gamma \ell(\Delta)}(u)(x,\varphi(x))=\left( \int_{\wt \Gamma_\gamma^{\gamma \ell(\Delta)}(x,\varphi( x))}\lvert \nabla u(Z)\rvert^2\lvert Z-(x,\varphi(x))\rvert^{2-n}dZ\right)^{\frac12}.
    \end{equation*}
    
    \end{cor}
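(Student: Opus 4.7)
The plan is to reduce Corollary \ref{cor.1.8} directly to Proposition \ref{prop.1.1} via the bi-Lipschitz flattening map $\Phi:\overline{D_\varphi}\to \overline{\R_+^n}$ defined in \eqref{lip-dom}, whose bi-Lipschitz norms depend only on $M=\|\nabla\varphi\|_\infty$. As recalled just before the statement, $\Phi$ conjugates divergence-form operators satisfying \eqref{ellip} to operators in the same class, with new ellipticity constants controlled by the old ellipticity and $M$. Concretely, if $\tilde L$ is the push-forward of $L$ to $\R_+^n$, then $u$ solves $Lu=0$ in $D_\varphi$ precisely when $\tilde u:=u\circ\Phi^{-1}$ solves $\tilde L\tilde u=0$ in $\R_+^n$; the elliptic measures transform by $\tilde\om^{\Phi(X)}=\Phi_\#\om^X$ under the natural identification of $\partial D_\varphi$ with $\R^{n-1}$, so in particular $\tilde\om^{A_Q}(\Phi(E))=\om^{A_\Delta}(E)$.

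First I would apply Proposition \ref{prop.1.1} to $\tilde L$ with the cube $Q$, pole $A_Q=\Phi(A_\Delta)$, and the transferred set $\tilde E:=\Phi(E)\subset Q$. Choosing the $\delta_0$ of the corollary to be the one from Proposition \ref{prop.1.1} applied to $\tilde L$ (which depends only on ellipticity, dimension and $M$), the hypothesis $\om(E)<\delta_0$ gives $\tilde\om^{A_Q}(\tilde E)<\delta_0$. Proposition \ref{prop.1.1} then produces a Borel set $\tilde H\ss Q$, constants $\gamma_0, \tilde C_1$ and a solution $\tilde u$ of \eqref{eqn.dp} with $\tilde u|_{\R^{n-1}}=\chi_{\tilde H}$ such that
\begin{equation*}
\tilde C_1\,\ol k^{1/2}\le S_{\gamma_0}^{\gamma_0\ell(Q)}(\tilde u)(x,0),\quad (x,0)\in\tilde E,
\end{equation*}
with $\ol k=[\log(1/\om(E))]$. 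Set $H:=\Phi^{-1}(\tilde H)\ss\Delta$ and $u:=\tilde u\circ\Phi$; then $u$ solves \eqref{eqn.dp'} with data $\chi_H$.

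It remains to transfer the square-function lower bound. The change of variables $Z=\Phi^{-1}(Y)$ has unit Jacobian, so $dZ=dY$; by the chain rule and the bound $\|\nabla\varphi\|_\infty=M$ one has the pointwise comparison $|\nabla u(Z)|\simeq|\nabla\tilde u(\Phi(Z))|$ with constants depending only on $M$; and since the two differences $Z-(x,\varphi(x))$ and $\Phi(Z)-(x,0)$ differ only in their last coordinate by $\varphi(y)-\varphi(x)$, which is bounded by $M|y-x|$, one also has $|Z-(x,\varphi(x))|\simeq|\Phi(Z)-(x,0)|$. Finally, $\Phi^{-1}$ sends the truncated cone $\Gamma_{\gamma_0}^{\gamma_0\ell(Q)}(x,0)$ into $\wt\Gamma_{\gamma}^{\gamma\ell(\Delta)}(x,\varphi(x))$ for a suitable $\gamma$ depending only on $\gamma_0$ and $M$. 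Putting these four facts together turns the lower bound for $\tilde u$ at $(x,0)\in\tilde E$ into the desired lower bound for $A_\gamma^{\gamma\ell(\Delta)}(u)$ at $(x,\varphi(x))\in E$, after absorbing the $M$-dependent comparability constants into $C_1$. The only real bookkeeping is to verify that $\delta_0, \gamma, C_1$ produced this way can be chosen uniformly in the cube $Q$; this follows from the horizontal translation and dilation invariance already used in the proof of Proposition \ref{prop.1.1}, together with the fact that the bi-Lipschitz constants of $\Phi$ depend only on $M$.
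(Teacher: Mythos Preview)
Your proposal is correct and follows essentially the same approach as the paper: the paper's proof consists of the single sentence that Corollary~\ref{cor.1.8} follows immediately from Proposition~\ref{prop.1.1} by the change of variables $\Phi$, together with the observation $\om^{A_Q}_{\R_+^n}(\Phi(E))=\o(E)$. You have simply spelled out the details of that change of variables (unit Jacobian, comparability of gradients and distances, cones mapping to cones), which is exactly what the paper intends but leaves implicit.
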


    Corollary \ref{cor.1.8} follows immediately from Proposition \ref{prop.1.1} by the change of variables $\Phi$. Note that in particular $\om^{A_Q}_{\R_n^+}(\Phi(E))=\o(E)$. To obtain our main results, which are consequences of Proposition \ref{prop.1.1} and Corollary \ref{cor.1.8}, we return for now to the case of the upper half-plane $\R_+^n$.

    \begin{thm}\label{thm.1.10}
        Fix a cube $Q\ss \R^{n-1}$. Assume that for all solutions $u$ to
    \begin{equation*}
        \tag{DP}
        \label{eqn.dp''}
        \left\{
        \begin{aligned}
            Lu&= 0 \text{ in }\R_+^n\\
            u\vert_{\R_+^{n-1}}&= \chi_H
        \end{aligned}
        \right.
    \end{equation*}
    where $H\ss Q$ is a Borel set, the following estimate holds
    \begin{equation}
        \frac{1}{\lvert (1+\gamma)Q\rvert }\int_{T\left( (1+\gamma)Q \right)}t\lvert \nabla u(y,t)\rvert^2dydt\leq A,
        \label{eqn.1.11}
    \end{equation}
    where $A$ is fixed and $\gamma$ is the constant depending on ellipticity and dimension from Proposition \ref{prop.1.1}. Then if $E\ss Q$ is a Borel set, $\o=\o^{A_Q}$, and $\o(E)<\delta_0$ with
    $\delta_0$ is as in Proposition \ref{prop.1.1}, we have
    \begin{equation}
        \frac{\lvert E\rvert}{\lvert Q\rvert}\leq C\left[ \log \frac{1}{\o(E)} \right]^{-1},
        \label{eqn.1.12}
    \end{equation}
    where $C$ depends only on dimension, ellipticity and $A$.
    \end{thm}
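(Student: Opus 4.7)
The plan is to integrate the pointwise square function lower bound supplied by Proposition \ref{prop.1.1} over $E$ and then, after Fubini, dominate the resulting quantity by the Carleson-measure hypothesis \eqref{eqn.1.11}. By translation and dilation invariance, I may assume $Q = Q_0$. Since $\omega(E) < \delta_0$, Proposition \ref{prop.1.1} produces a Borel set $H \subset Q$ so that the solution $u$ of \eqref{eqn.dp''} with boundary data $\chi_H$ satisfies
\[
C_1^2 \bar k \le \bigl(S_\gamma^{\gamma \ell(Q)}(u)(x,0)\bigr)^2 \qquad \text{for every } (x,0) \in E,
\]
with $\bar k=[\log(1/\omega(E))]$. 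Integrating over $E$ and writing out the definition \eqref{square-funct} of $S_\gamma^{\gamma\ell(Q)}$ gives
\[
C_1^2 \bar k \, |E| \le \int_E \int_{\Gamma_\gamma^{\gamma\ell(Q)}(x,0)} |\nabla u(Y)|^2 \, |Y-(x,0)|^{2-n} \, dY\, dx.
\]

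The next step is to apply Fubini, writing $Y=(y,s)$. For fixed $Y$, the inner region is $\{x \in E : |y-x| \le \gamma s\}$, and a radial computation in $\R^{n-1}$ gives
\[
\int_{\{|y-x|\le \gamma s\}} (|y-x|^2 + s^2)^{(2-n)/2}\,dx \le C(n,\gamma)\, s.
\]
Moreover, for $Y$ to contribute, one needs $x \in Q$ with $|y-x|\le \gamma s$ and $0<s\le \gamma \ell(Q)$; this forces $Y \in T((1+\gamma)Q)$ provided $\gamma$ is smaller than a dimensional constant, which can be arranged by further shrinking the $\gamma$ from Proposition \ref{prop.1.1}. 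Therefore
\[
C_1^2 \bar k \, |E| \le C(n,\gamma) \int_{T((1+\gamma)Q)} s\, |\nabla u(y,s)|^2\, dy\, ds.
\]

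Finally, I would invoke the Carleson hypothesis \eqref{eqn.1.11} to bound the right-hand side by $C(n,\gamma)\, A\, |(1+\gamma)Q| \le C'(n,\gamma)\, A\, |Q|$. Dividing through by $C_1^2 \bar k \, |Q|$ and using $\bar k \sim \log(1/\omega(E))$ yields \eqref{eqn.1.12} with a constant depending only on dimension, ellipticity, and $A$. The only non-routine point is the geometric bookkeeping in the second step, namely matching the aperture and truncation of the cones $\Gamma_\gamma^{\gamma\ell(Q)}(x,0)$ for $x \in Q$ to the Carleson tent $T((1+\gamma)Q)$; this is handled by choosing the same $\gamma$ (possibly smaller) throughout and does not introduce any real difficulty.
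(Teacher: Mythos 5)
Your argument reproduces the paper's proof: apply Proposition \ref{prop.1.1} to obtain the pointwise lower bound on the truncated square function, integrate over $E$, apply Fubini, and close with the Carleson hypothesis \eqref{eqn.1.11}. One small caution on your final remark: ``further shrinking the $\gamma$ from Proposition \ref{prop.1.1}'' to force the cones into $T((1+\gamma)Q)$ is not quite right, because replacing $\gamma$ by a smaller value shrinks the cone $\Gamma_\gamma^{\gamma\ell(Q)}$ and hence the square function, so the lower bound \eqref{eqn.1.2} is no longer guaranteed; instead one should note that the containment $\Gamma_\gamma^{\gamma\ell(Q)}(x,0)\subset T\bigl((1+\gamma)Q\bigr)$ for $x\in Q$ already holds whenever $\gamma\le 1/2$ (a harmless normalization one can build into Proposition \ref{prop.1.1}), or, if $\gamma>1/2$, enlarge the tent to $T(CQ)$ for a dimensional $C$, which costs only a constant in the final estimate.
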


    \begin{proof}
        Let $E$ be as above, $H$ as in Proposition \ref{prop.1.1}. Then, provided $\o(E)<\delta_0$ \eqref{eqn.1.2} ensures that 
        \begin{equation*}
            C_1\left[ \log \frac{1}{\o(E)} \right]\leq S_\gamma^{\gamma \ell(Q)}(u)^2 (x,0),
        \end{equation*}
        for all $(x,0)\in E$. Integrating over $E$, by Fubini, we obtain
        \begin{equation*}
            C_1\left[ \log \frac{1}{\o(E)} \right]\lvert E\rvert \leq \iint_{T\left( (1+\gamma)Q \right)}t\lvert \nabla u(y,t)\rvert^2dydt\leq A\lvert \left( 1+\gamma \right)Q\rvert,
        \end{equation*}
        by \eqref{eqn.1.11}. Theorem \ref{thm.1.10} follows.
    \end{proof}

    \begin{cor}\label{cor.1.13}
       With the notation of Theorem \ref{thm.1.10}, assume that \eqref{eqn.1.11} holds for any cube $Q$, that is the solution of \eqref{eqn.dp''} with data characteristic function of a bounded Borel set, 
       verifies the Carleson measure condition. Then for any cube $Q_0\subset\R^{n-1}$, $\o=\o^{A_{Q_0}} \in A_{\infty}(Q_0)$. That is for every $\varepsilon\in (0,1)$ there exists $\delta\in(0,1)$ such that for any 
      cube $Q\subset Q_0$ and $E\subset Q$ with 
     \begin{equation} \label{eqn.1.14}
\frac{\o(E)}{\o(Q)}<\delta   \hbox{    then   }  \frac{|E|}{|Q|}<\varepsilon.
          \end{equation}
          Moreover, there exists $p_0>1$ depending on the ellipticity, the dimension and $A$, such that, for $p_0\leq p<\infty$, the solution of \eqref{eqn.dp''} with data $F$ continuous and of compact support satisfies
        \begin{equation}
            \lVert u^*\rVert_{L^p(\R^{n-1})}\leq \wt C\lVert F\rVert_{L^p(\R^{n-1})},
            \label{eqn.1.15}
        \end{equation}
        with $u^*(x,0)=\sup_{X\in \Gamma_\gamma(x,0)}\lvert u(X)\rvert$ and $\wt C$ depending on ellipticity, dimension, $A$ and $p$.
    \end{cor}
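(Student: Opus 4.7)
The plan is to deduce the $A_\infty$ statement \eqref{eqn.1.14} directly from Theorem \ref{thm.1.10} by a change-of-pole argument, and then to obtain the non-tangential maximal function bound \eqref{eqn.1.15} via the standard correspondence between reverse H\"older classes of the elliptic kernel and $L^p$-solvability of the Dirichlet problem.

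For the $A_\infty$ step, I would fix an arbitrary cube $Q\subset Q_0$ and a Borel set $E\subset Q$. Since by hypothesis the Carleson bound \eqref{eqn.1.11} holds for every cube (with the same constant $A$), Theorem \ref{thm.1.10} applies to $Q$ itself, with pole at $A_Q$, yielding
\begin{equation*}
\frac{|E|}{|Q|}\le \frac{C}{\log(1/\omega^{A_Q}(E))}\quad\text{provided}\quad \omega^{A_Q}(E)<\delta_0,
\end{equation*}
with $C,\delta_0$ depending only on dimension, ellipticity, and $A$. The remaining task is to convert the hypothesis $\omega^{A_{Q_0}}(E)/\omega^{A_{Q_0}}(Q)<\delta$ into smallness of $\omega^{A_Q}(E)$. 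For this I would invoke the standard change-of-pole formula -- a consequence of the comparison principle together with a Harnack chain joining $A_Q$ to $A_{Q_0}$ -- which gives, for Borel $E\subset Q$,
\begin{equation*}
\frac{\omega^{A_Q}(E)}{\omega^{A_Q}(Q)}\simeq \frac{\omega^{A_{Q_0}}(E)}{\omega^{A_{Q_0}}(Q)},
\end{equation*}
with comparability constants depending only on ellipticity and dimension. Since $\omega^{A_Q}(Q)\le 1$, this yields $\omega^{A_Q}(E)\le C\delta$, which falls below $\delta_0$ once $\delta$ is chosen small enough, and then the logarithmic factor forces $|E|/|Q|<\varepsilon$, proving \eqref{eqn.1.14}.

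With \eqref{eqn.1.14} in hand, condition (iii) of Definition \ref{dfn.cm} gives $\omega^{A_{Q_0}}\in A_p(dx)$ for some finite $p>1$, or equivalently the density of $\omega^{A_{Q_0}}$ satisfies a reverse H\"older inequality with exponent $q_0=p'>1$. By the now-classical correspondence (in the nonsymmetric form recorded in \cite{KKPT}) between such a reverse H\"older property of the elliptic kernel and solvability of the Dirichlet problem, this is equivalent to $\|u^\ast\|_{L^{q_0'}(\R^{n-1})}\lesssim\|F\|_{L^{q_0'}(\R^{n-1})}$. Since the $B_q$ classes are nested ($B_q\subseteq B_{q'}$ when $q\ge q'$), the same bound persists for every $q_0'\le p<\infty$, yielding \eqref{eqn.1.15} with $p_0:=q_0'$.

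The only subtle point is the first step, and specifically the change-of-pole reduction together with the quantitative tracking of how $\delta$ depends on $\varepsilon$; everything else is a routine application of well-developed $A_\infty$/$A_p$ theory and of the equivalence between reverse H\"older classes of elliptic measure and $L^p$-solvability. The genuinely new analytic input has already been carried out in Proposition \ref{prop.1.1} and Theorem \ref{thm.1.10}, so the corollary amounts to a clean packaging of those results.
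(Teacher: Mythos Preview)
Your proposal is correct and follows essentially the same route as the paper's proof: reduce to Theorem \ref{thm.1.10} applied to the subcube $Q$ with pole $A_Q$ via a change-of-pole estimate, then invoke the standard $A_\infty$/reverse H\"older machinery for \eqref{eqn.1.15}. The paper packages the change-of-pole step slightly differently, citing the abstract fact that $\omega^{A_{Q_0}}\in A_\infty(\omega^{A_Q})$ from \cite{KenigCBMS}, whereas you invoke the comparison principle directly to obtain $\omega^{A_Q}(E)/\omega^{A_Q}(Q)\simeq \omega^{A_{Q_0}}(E)/\omega^{A_{Q_0}}(Q)$; these are equivalent formulations of the same CFMS-type estimate, so there is no substantive difference.
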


    \begin{proof}[Proof of Corollary \ref{cor.1.13}]
    Recall (see \cite{KenigCBMS}) that $\o\in A_\infty(\o^{A_Q})$ thus given $\eta>0$ there is $\delta>0$ so that if $\frac{\o(E)}{\o(Q)}<\delta$ then $\frac{\o^{A_Q}(E)}{\o^{A_Q}(Q)}<\eta$. We consider $\eta<\delta_0$ to be specified
    and $\delta_0$ as in Theorem \ref{thm.1.10}.
    In this case $\o^{A_Q}(E)<\eta \o^{A_Q}(Q)\le \eta<\delta_0$ and \eqref{eqn.1.12} implies that choosing $\eta$ small enough depending on $\varepsilon$ we have 
     \begin{equation}\label{eqn1.15-tt}
  \frac{|E|}{|Q|}\le C\left[ \log \frac{1}{\o^{A_Q}(E)} \right]^{-1}\le C\left(\log\frac{1}{\eta}\right)<\varepsilon,
          \end{equation}
          which proves \eqref{eqn.1.14}.
     \eqref{eqn.1.15} in turn follows from \eqref{eqn.1.14} by well known arguments (see \cite{KenigCBMS}, \cite{HKMP}, \cite{Kenig-Shen}, the discussion around (4.3).)
    \end{proof}
    
    Recall that the fact that $\o^{A_{Q}} \in A_{\infty}(Q)$ as in \eqref{eqn.1.14} ensures that estimate \eqref{eqn.1.17} holds (see \cite{KenigCBMS}). Theorem \ref{thm.1.16} ensures that the converse is true and provides  a necessary and sufficient condition for the elliptic measure of a second order differential operator in the upper half plane to be an $A_\infty$ weight with respect to the surface measure to the boundary.

    \begin{thm}\label{thm.1.16}
        Assume that either for some $p$, $1+\frac{1}{n-2}\leq p<\infty$ when $n\geq 3$ or  $p_0\leq p<\infty$ with $p_0$ depending only on ellipticity, when $n=2$, 
        the following estimate holds
        \begin{equation}
            \lVert S_\gamma(u)\rVert_{L^p\left( \R^{n-1} \right)}\leq A\lVert u^*\rVert_{L^p\left( \R^{n-1} \right)},
            \label{eqn.1.17}
        \end{equation}
        where $\gamma$ as in Proposition \ref{prop.1.1} and
        \begin{equation*}
        S_\gamma(u)(x,0)=\left( \int_{\Gamma_\gamma(x,0)}\lvert \nabla u(Z)\rvert^2\lvert Z-(x,0)\rvert^{2-n}dZ \right)^{\frac12},
        \end{equation*}
         for all solutions $u$ to \eqref{eqn.dp''} with data $\chi_H$, where $H$ is a bounded Borel set. Then there exist positive constants $\delta_1$ and $C_1$
         depending only on ellipticity, dimension, $A$ and $p$ such that for any cube $Q\subset \R^{n-1}$ and any Borel set $E\subset Q$ with $\o(E)=\o^{A_Q}(E)<\delta_1$
         \begin{equation}\label{eqn-tt-1.16}
          \frac{|E|}{ |Q|} \le C_1\left(\log\frac{1}{\o(E)}\right)^{-\frac{p}{2}}
         \end{equation}
         Hence for every cube $Q\subset \R^{n-1}$, $\o^{A_Q}\in A_\infty(Q)$.
         
    \end{thm}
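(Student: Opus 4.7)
The strategy is to combine the pointwise lower bound on the square function supplied by Proposition \ref{prop.1.1} with the hypothesized $L^p$ comparison $\lVert S_\gamma(u)\rVert_p \leq A\lVert u^*\rVert_p$. Fix a cube $Q \subset \R^{n-1}$ and a Borel set $E \subset Q$ with $\omega^{A_Q}(E) < \delta_1$, where $\delta_1 \leq \delta_0$ is the threshold from Proposition \ref{prop.1.1}. First, I would apply that proposition to produce a Borel set $H \subset Q$ and the solution $u$ of \eqref{eqn.dp''} with boundary data $\chi_H$ satisfying $S_\gamma^{\gamma\ell(Q)}(u)(x,0)\geq C_1\,\ol k^{1/2}$ for all $x\in E$, where $\ol k = \log(1/\omega^{A_Q}(E))$. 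Since $\Gamma_\gamma^{\gamma\ell(Q)}(x,0)\subset \Gamma_\gamma(x,0)$, the same lower bound holds for $S_\gamma(u)$. Raising to the $p$-th power, integrating over $E$, and invoking \eqref{eqn.1.17} gives
$$C_1^p\,\ol k^{p/2}\,|E|\ \leq\ \int_{\R^{n-1}}S_\gamma(u)^p\,dx\ \leq\ A^p\int_{\R^{n-1}}(u^*)^p\,dx.$$

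The heart of the argument is then the uniform estimate $\lVert u^*\rVert_{L^p(\R^{n-1})}^p \leq C|Q|$ with $C$ depending only on ellipticity, dimension, and $p$. Two ingredients drive it: the maximum principle gives $u^*\leq 1$ everywhere (since $0\leq \chi_H\leq 1$), and the support condition $H\subset Q$ forces $u$ to decay at infinity. For $n\geq 3$, writing $u(X) = \omega^X(H)\leq\omega^X(Q)$ and using the CFMS-type comparison $\omega^X(Q)\simeq G(X,A_Q)\,\ell(Q)^{n-2}$ (valid for $X$ outside a dilate of $T(Q)$) together with the Green function bound $G(X,A_Q)\leq C|X-A_Q|^{2-n}$ yields
$$u^*(x)\ \leq\ C\,\min\bigl\{1,\,(\ell(Q)/|x-x_Q|)^{n-2}\bigr\}.$$
A direct polar-coordinate computation then gives $\int_{\R^{n-1}}(u^*)^p\,dx\leq C|Q|$ precisely when $(n-2)(p-1)>1$, i.e.\ $p$ exceeds the threshold $1+1/(n-2)$. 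For $n=2$, the Green function is only logarithmic, so I would instead apply boundary H\"older continuity to $u$ at points of $\partial\R_+^2\setminus Q$ (where $u=0$ and $\lVert u\rVert_\infty\leq 1$), producing $u^*(x)\leq C(\ell(Q)/|x-x_Q|)^\beta$ with $\beta\in(0,1)$ the De Giorgi--Nash--Moser exponent; integrability requires $p\beta>1$, which fixes $p_0=1/\beta$ as a function of ellipticity.

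Combining the two displays yields
$$\frac{|E|}{|Q|}\ \leq\ C_1'\left(\log\frac{1}{\omega^{A_Q}(E)}\right)^{-p/2},$$
which is \eqref{eqn-tt-1.16}. To pass to the $A_\infty$ statement, I would argue as in Corollary \ref{cor.1.13}: using the change-of-pole fact $\omega\in A_\infty(\omega^{A_Q})$ on cubes, given $\varepsilon\in(0,1)$ one chooses $\delta$ small enough that $\omega(E)/\omega(Q)<\delta$ forces both $\omega^{A_Q}(E)<\delta_1$ and the right-hand side above to be less than $\varepsilon$, verifying Definition \ref{dfn.cm}(i). The principal obstacle in this plan is the $L^p$ decay estimate on $u^*$: the admissible range of $p$ is entirely dictated by how fast $u$ decays away from $Q$, and the contrast between $n\geq 3$ (Green function decay $|X|^{2-n}$, ellipticity-free) and $n=2$ (only H\"older decay, ellipticity-dependent) is exactly what produces the dichotomy in the hypothesis of the theorem.
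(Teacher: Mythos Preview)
Your strategy matches the paper's: apply Proposition~\ref{prop.1.1}, integrate the $p$-th power of the square-function lower bound over $E$, invoke \eqref{eqn.1.17}, and then show $\lVert u^*\rVert_{L^p}^p\lesssim |Q|$. Your passage from the truncated to the full square function via the trivial inclusion $\Gamma_\gamma^{\gamma\ell(Q)}\subset\Gamma_\gamma$ is correct and in fact simpler than what the paper does (the paper separately bounds the tail $S_\gamma^2-\bigl(S_\gamma^{\gamma\ell(Q)}\bigr)^2$ by a constant via Caccioppoli and decay, which is unnecessary for a lower bound).

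There is one small gap. For $n\geq 3$ you use only the Green-function bound $G(X,A_Q)\leq C|X-A_Q|^{2-n}$, obtaining $u^*(x)\lesssim(\ell(Q)/|x-x_Q|)^{n-2}$. As you yourself note, this makes $\int(u^*)^p$ finite only for $p>1+\frac{1}{n-2}$ strictly, so you miss the endpoint $p=1+\frac{1}{n-2}$ that the theorem allows. The paper handles all dimensions uniformly via Lemma~4.9 of \cite{HKMP}, which gives the sharper decay
\[
|u(X)|\ \leq\ C\Bigl(\frac{\ell(Q)}{|X-(x_Q,0)|}\Bigr)^{\,n-2+\alpha},\qquad |X-(x_Q,0)|\geq 3\ell(Q),
\]
with $\alpha>0$ depending only on ellipticity. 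The extra $\alpha$ is exactly the boundary H\"older gain coming from the fact that $u$ vanishes on $\R^{n-1}\setminus Q$; it is the same mechanism you already invoke for $n=2$. With this exponent the integrability condition becomes $p(n-2+\alpha)>n-1$, which holds at $p=1+\frac{1}{n-2}$ since $\alpha>0$. So the fix is simply to run your $n=2$ boundary-H\"older argument in all dimensions (or quote the \cite{HKMP} lemma) rather than splitting cases; your dichotomy between an ellipticity-free threshold for $n\geq3$ and an ellipticity-dependent one for $n=2$ is then seen to be an artifact of using the cruder interior Green bound.
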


    \begin{proof}
        Fix a cube $Q\ss \R^{n-1}, \gamma, \delta_0$ as in Propostion \ref{prop.1.1}. Let $E\ss Q$ be a Borel set, $\o=\o^{A_Q}$ and assume $\o(E)\leq \delta<\delta_0$. Let $H$ be the Borel set in Proposition \ref{prop.1.1}. Then, by \eqref{eqn.1.2}, for all $(x,0)\in E$ we
        \begin{equation}
            C_1\left[ \log \frac{1}{\o(E)} \right]\leq S_\gamma^{\gamma \ell(Q)}(u)^2(x,0).
            \label{eqn.1.18}
        \end{equation}
        Now, by Lemma 4.9 in \cite{HKMP}, we have for $X\in \R_+^n$ with $\lvert X-(x_Q,0)\rvert \geq 3\ell(Q)$
        \begin{equation}
            \lvert u(x)\rvert \leq C\left( \frac{\ell(Q)}{\lvert X-(x_Q,0)\rvert} \right)^{n-2+\alpha},
            \label{eqn.1.19}
        \end{equation}
         where $\alpha>0$ depends only on the dimension and the ellipticity. Consider now, for $(x,0)\in E$,
        \begin{align}\label{eqn1.18A}
            & \int_{\Gamma_\gamma(x,0)\bs\Gamma_\gamma^{\gamma \ell(Q)}(x,0)}\lvert \nabla u\rvert^2 (Y)\frac{1}{\lvert Y-(x,0)\rvert^{n-2}}dY\\
            &= \sum_{j=0}^\infty \int_{\left\{ Y=(y,t): \lvert x-y\rvert < \gamma t,\, 2^j\ell(Q)\leq t\leq 2^{j+1}\ell(Q)  \right\}}\lvert \nabla u(Y)\rvert^2 \frac{1}{\lvert Y-(x,0)\rvert^{n-2}}dY\nonumber\\
            &= \sum_{j=0}^\infty R_j.\nonumber
        \end{align}
        We proceed to estimate $R_j$, for each $j$. If $Y$ belongs to the region of the integration defining $R_j$, we have $\lvert Y-(x,0)\rvert^{n-2}\simeq \left[ 2^j\ell(Q) \right]^{n-2}$. Using the Cacciopoli estimate ((1.3) in \cite{KKPT}), combined with \eqref{eqn.1.19} we see that
        \begin{equation}\label{eqn1.19A}
            R_j\lesssim \left( 2^j\ell(Q) \right)^{-(n-2)}\cdot \left( 2^j\ell(Q) \right)^{-2}\cdot \left( 2^j\ell(Q) \right)^n\cdot \left[ \frac{\ell(Q)}{2^j \ell(Q)} \right]^{n-2+\alpha}=2^{-j (n-2+\alpha)}.
        \end{equation}
        Thus, 
        \begin{equation}\label{eqn1.19B}
            \int_{\Gamma_\gamma(x,0)\bs \Gamma_\gamma^{\gamma \ell(Q)}(x,0)}\lvert \nabla u(Y)\rvert^2 \frac{1}{\lvert Y-(x,0)\rvert^{n-2}} dy\leq C,
        \end{equation}
        where $C$ depends on ellipticity and dimension. Combining \eqref{eqn1.18A}, \eqref{eqn1.19B} with \eqref{eqn.1.18}, and taking $\delta$ small enough, depending only on ellipticity and dimension, we obtain
        \begin{equation}
            \wt C_1\left[ \log \frac{1}{\o(E)} \right]\leq S_\gamma(u)^2 (x,0),
            \label{eqn.1.20}
        \end{equation}
        for all $(x,0)\in E$. We now take the $\frac{p}{2}$ power of both sides of $\eqref{eqn.1.20}$ and integrate over $E$ with respect to Lesbesgue measure in $\R^{n-1}$. We obtain
        \begin{equation}
            \wt C_1^{\frac p2}\left[ \log \frac{1}{\o(E)} \right]^{\frac12}\lvert E\rvert \leq \int_E S_\gamma(u)^p \, dx
            \label{eqn.1.21}
        \end{equation}
        We now use \eqref{eqn.1.17} to bound the right hand side of \eqref{eqn.1.21} by
        \begin{equation}\label{eqn1.21A}
            A^p\int_{\R^{n-1}}\left( u^* \right)^p \, dx=A^p\int_{3Q}\left( u^* \right)^p  \, dx+A^p\int_{\R^{n-1}\bs 3Q}\left( u^* \right)^p \, dx = \text{I} + \text{II}.
        \end{equation}
        Since $0\leq u\leq 1$
        \begin{equation}\label{eqn1.21'}
         \text{I}\leq C_nA^p\lvert Q\rvert.
         \end{equation} 
         For II, we use the estimate \eqref{eqn.1.19}. If $x\in \R^{n-1}\bs 3Q$, $Y\in \Gamma_\gamma(x,0)$, we know that $\lvert Y-(x,0)\rvert \simeq \text{dist} (Y,\R^{n-1}) \leq \lvert Y-(x_Q,0)\rvert$. Then
         \begin{equation}\label{eqn1.21B}
          3\ell(Q)\le \lvert (x,0)-(x_Q,0)\rvert \leq \lvert (x,0)-Y\rvert +\lvert Y-(x_Q,0)\rvert \lesssim \lvert Y-(x_Q,0)\rvert
         \end{equation}
          so that 
          \begin{equation}\label{eqn1.21C}
          u^*(x,0)\lesssim \left[ \frac{\ell(Q)}{\lvert x-x_Q\rvert} \right]^{n-2+\alpha}
          \end{equation} Hence,
        \begin{eqnarray}\label{eqn1.21D}
            \text{II}& \lesssim& A^p \int_{\lvert x-x_Q\rvert\geq 3\ell(Q)} \left[ \frac{\ell(Q)}{\lvert x-x_Q\rvert} \right]^{p\left[ n-2+\alpha \right]}\\
            &            \lesssim &
           A^p \sum_{j=1}^\infty \int_{3^j\ell(Q)\le\lvert x-x_Q\rvert\le 3^{j+1}\ell(Q)} \left[ \frac{\ell(Q)}{3^j\ell(Q)} \right]^{p\left[ n-2+\alpha \right]}\, dx\nonumber\\
           &\lesssim& A^p |Q|\sum_{j=1}^\infty 3^{-j(p(n-2+\alpha) -n +1}
           \lesssim
            A^p\lvert Q\rvert,\nonumber
        \end{eqnarray}
        since $p\geq 1+(n-2)^{-1}(n\geq 3)$ or $p>\frac{1}{\alpha}$ when  $n=2$. Combining \eqref{eqn.1.21}, \eqref{eqn1.21A}, \eqref{eqn1.21'} and \eqref{eqn1.21D} we obtain
        \begin{equation}
            \wt C_1^{\frac p2}\left[ \log \frac{1}{\o(E)} \right]^{\frac p2}\lvert E\rvert \leq C\lvert Q\rvert,
            \label{eqn.1.22}
        \end{equation}
        with $C$ and $\wt C_1$ depending on ellipticity, dimension, $p$, $A$. The conclusion of Theorem \ref{thm.1.16} as in \eqref{eqn1.15-tt}.
            \end{proof}
            
    \begin{rmk}\label{lip-rmk}
        Theorem \ref{thm.1.10}, Corollary \ref{cor.1.13} and Theorem \ref{thm.1.16} have corresponding version in domains $D_\varphi$ with $\varphi$ Lipschitz (see \eqref{lip}). This follows from the change of variable argument use in Corollary \ref{cor.1.8}.        
    \end{rmk}

    \section{Bounded Lipschitz Domains}
    \setcounter{equation}{0}
    In this section we establish variants of the results in Section \ref{sec.1}, valid for bounded Lipschitz domains $\Omega\ss \R^n$. A bounded domain $\Omega\subset\R^n$ is said to be Lipschitz if there exists $R>0$ such that for all $P\in \partial \Omega$ there is an $(n-1)$-plane $L(P)$ through $P$ and a Lipschitz function $\varphi_P$ defined on $L(P)$ such that 
    \begin{equation}\label{bdd-lip-dom1}
    \Omega\cap B(P,2R)=\left\{ (x,t)\in B(P,2R): x\in L(P),\ t>\varphi_P(x) \right\}.
    \end{equation}
  Since $\Omega$ is bounded, $\partial \Omega$ can be covered by finitely many balls $\{B(P_i, R_i)\}_i$ with $P_i\in\partial\Omega$ and $|P_i-P_j|\ge R$. Let $M=\max_{1\le i\le m} \rm{Lip}\varphi_{P_i}$. Since for every $P\in\partial\Omega$ there exists $i=1,\cdots, m$ such that $|P-P_i|<R/2$ then $B(P,R)\subset B(P_i, 2R)$ for some $i$ and 
  \begin{equation}\label{bdd-lip-dom2}
    \Omega\cap B(P,R)=\left\{ (x,t)\in B(P,R): x\in L(P_i),\ t>\varphi_{P_i}(x) \right\}.
    \end{equation}
    Hence \eqref{bdd-lip-dom2} ensures that  there exists $M>0$ as above such that for each $P\in\partial\Omega$, $\rm{Lip}\varphi_P\le M$ where $\varphi_P$ is a Lipschitz function used to represent $\Omega\cap B(P,R/2)$ as in \ref{bdd-lip-dom1}. We refer to $R$, $m$ and $M$ as the Lipschitz character of the domain $\Omega$.

\bigskip
    We start with the analog of Proposition \ref{prop.1.1}. We assume without loss of generality that $0\in \Omega$ and let $\o=\o^0$, be the elliptic measure with pole at $0$.

    \begin{prop}\label{prop.2.1}
        Let $\Omega\ss \R^n$ be a bounded Lipschitz domain, and let $L$ an elliptic operator. Assume that $\Delta$, a surface ball on $\partial \Omega$ has radius less than $R$ (as above).
    There are constants $\wt \delta_0>0$, $\gamma>0$, depending only on the ellipticity, the dimension and the Lipschitz character of $\Omega$ such that 
         if $E\ss \Delta$ is a Borel set with $\omega(E)\le \delta\omega(\Delta)$ where $\delta<\wt\delta_0$, there exists 
       a Borel set $H\ss \Delta$ such that the solution $u$ of
     \begin{equation*}
        \tag{DP}
        \label{eqn.dp'''}
        \left\{
        \begin{aligned}
            Lu&= 0 \text{ in }\Omega\\
            u\vert_{\partial \Omega}&= \chi_H
        \end{aligned}
        \right.
    \end{equation*}
satisfies
    \begin{equation}
        \wt C_1 \ol k^{\frac12}\leq A_\gamma^{\gamma \ell(\Delta)}(u)(P), \text{ for all } P\in E.
        \label{eqn.2.2}
    \end{equation}
    Here 
    \begin{equation*}
        A_\gamma^{\gamma \ell(\Delta)}(u)(P)=\left( \int_{\ol \Gamma_\gamma^{\gamma l(\Delta)}(P)}\lvert \nabla u(Y)\rvert^2\lvert Y-P\rvert^{2-n}dy \right)^{\frac12},
    \end{equation*}
     $\ol k= \log  \frac{\o(\Delta)}{\o(E)} $ and $\wt C_1$ depends only on the ellipticity, the dimension and the Lipschitz character of $\Omega$.
   \end{prop}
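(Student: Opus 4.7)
The plan is to reduce Proposition \ref{prop.2.1} to Corollary \ref{cor.1.8} by exploiting the local Lipschitz graph structure of $\partial \Omega$ and by changing the pole of the elliptic measure from $0$ to a corkscrew point for $\Delta$. Since $\ell(\Delta)<R$, the Lipschitz character of $\Omega$ provides a point $P\in \partial \Omega$ with $\Delta\subset B(P,R)\cap\partial \Omega$, an $(n-1)$-plane $L(P)$, and a Lipschitz function $\varphi_P$ on $L(P)$ with $\|\nabla\varphi_P\|_\infty\le M$ such that $\Omega\cap B(P,R)$ coincides with the portion of the graph domain above $\varphi_P$ inside $B(P,R)$. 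After a rigid motion identifying $L(P)$ with $\R^{n-1}$, I would use a McShane/Kirszbraun extension to obtain $\wt\varphi$ defined on all of $\R^{n-1}$ with the same Lipschitz constant and agreeing with $\varphi_P$ near $P$. Let $D=D_{\wt\varphi}$ be the associated Lipschitz graph domain (see \eqref{lip}); then $\Omega\cap B(P,R)=D\cap B(P,R)$, and the coefficients of $L$ extend to $D$ by any bounded extension preserving ellipticity.

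Next I would change the pole from $0$ to a corkscrew point $A_\Delta\in\Omega\cap D$ for $\Delta$. By the CFMS comparison principle applied in $\Omega$, together with the nondegeneracy estimate $\omega^{A_\Delta}_\Omega(\Delta)\simeq 1$, one has
\begin{equation*}
\frac{\omega^0(F)}{\omega^0(\Delta)}\simeq \omega^{A_\Delta}_\Omega(F),\qquad F\subset\Delta \text{ Borel},
\end{equation*}
with constants depending only on dimension, ellipticity and the Lipschitz character of $\Omega$. A second comparison argument applied in the common region $\Omega\cap B(P,R)=D\cap B(P,R)$, using that the two Green's functions with pole $A_\Delta$ differ by a bounded $L$-harmonic function with vanishing trace on $\partial \Omega\cap B(P,R)=\partial D\cap B(P,R)$, gives $\omega^{A_\Delta}_\Omega(F)\simeq \omega^{A_\Delta}_D(F)$ for Borel $F\subset\Delta$. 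Hence the hypothesis $\omega(E)\le\delta\,\omega(\Delta)$ translates into $\omega^{A_\Delta}_D(E)\lesssim\delta$, and $\ol{k}=\log(\omega(\Delta)/\omega(E))\simeq \log(1/\omega^{A_\Delta}_D(E))$ up to universal multiplicative constants. Choosing $\wt\delta_0$ small enough, I would then apply Corollary \ref{cor.1.8} in the Lipschitz graph domain $D$ with pole $A_\Delta$ to produce a Borel set $H\subset\Delta$ and the solution $u_D$ of $Lu_D=0$ in $D$, $u_D|_{\partial D}=\chi_H$, with $A_\gamma^{\gamma\ell(\Delta)}(u_D)(Q)\ge C_1\,\ol{k}^{\,1/2}$ for every $Q\in E$.

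The last step, which I expect to be the main technical obstacle, is to transfer this lower bound to the solution $u$ of \eqref{eqn.dp'''} in $\Omega$ with the same data $\chi_H$ (viewed as a subset of $\partial \Omega$ via $\Delta\subset\partial \Omega\cap\partial D$). Set $w=u-u_D$; since the two boundary data both live in $\Delta\subset \partial \Omega\cap \partial D$ and the domains coincide on $B(P,R)$, $w$ solves $Lw=0$ in $\Omega\cap B(P,R)$, is bounded by $1$, and has vanishing trace on $\partial \Omega\cap B(P,R)$. The boundary H\"older estimate of \cite{KKPT} combined with Caccioppoli's inequality bounds $|\nabla w(X)|$ on the truncated nontangential cone $\wt\Gamma_\gamma^{\gamma\ell(\Delta)}(Q)$ by a quantity comparable to $\mathrm{dist}(X,\partial \Omega)^{\alpha-1}R^{-\alpha}$, so that $A_\gamma^{\gamma\ell(\Delta)}(w)(Q)\lesssim (\ell(\Delta)/R)^{\alpha}\le 1$, an error depending only on the ellipticity, dimension and Lipschitz character. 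The subtle point is the absorption: the error is only a universal constant, while the main term grows like $\ol{k}^{\,1/2}\to\infty$ as $\omega(E)/\omega(\Delta)\to 0$. Thus by shrinking $\wt\delta_0$ one forces $\ol{k}$ large enough that the $w$-error is dominated by half the lower bound from Corollary \ref{cor.1.8}, and the triangle inequality for $A_\gamma^{\gamma\ell(\Delta)}$ yields \eqref{eqn.2.2} with an adjusted constant $\wt C_1$. The delicate matter is keeping the constants in the Green function/elliptic measure comparisons and in the H\"older estimate uniform in the Lipschitz character; the bi-Lipschitz change of variables \eqref{lip-dom} together with standard CFMS-type machinery delivers this quantitatively.
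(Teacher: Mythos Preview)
Your proposal is correct and follows essentially the same route as the paper: pass to the local Lipschitz graph domain $D_\varphi$, compare $\omega(E)/\omega(\Delta)$ with $\omega_{D_\varphi}^{A_\Delta}(E)$ via the CFMS-type estimates in \cite{KKPT}, apply Corollary~\ref{cor.1.8} to the solution $v=u_D$ in $D_\varphi$, and then control $w=v-u$ on the truncated cone by combining the boundary H\"older estimate with Caccioppoli, absorbing the resulting bounded error by shrinking $\wt\delta_0$. One small imprecision: with merely bounded measurable coefficients you do not get a pointwise bound on $\lvert\nabla w(X)\rvert$; the paper instead decomposes $\wt\Gamma_\gamma^{\gamma\ell(\Delta)}(P)$ into dyadic annuli and applies Caccioppoli in $L^2$ on each, which yields the same conclusion $A_\gamma^{\gamma\ell(\Delta)}(w)(P)\le C$.
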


   \begin{proof}
       Let $\Delta=B(P_{\Delta},r)\cap\partial\Omega$ with $P_\Delta\in\partial\Omega$, $r\le R$. Let $\varphi_{P_\Delta}=\varphi$. Without loss of generality we may assume that $L(P_\Delta)=\R^{n-1}$. Then 
       \[
       D_\varphi=\{(x,t): x\in\R^{n-1},\ t>\varphi(x)\}\]
       with $\|\nabla \varphi\|_\infty\le M$ (see \eqref{lip}).
       Let $v$ be the solution to
     \begin{equation*}
         \tag{$DP_{D_\varphi}$}
         \label{eqn.dp4}
        \left\{
        \begin{aligned}
            Lv&= 0 \text{ in }D_\varphi\\
            v\vert_{\partial D_\varphi}&= \chi_H
        \end{aligned}
        \right.
    \end{equation*}
    Consider $w=v-u$, in the domain 
    \begin{equation*}
        T(\gamma \Delta)=\left\{ X=(x,t): \lvert x-P_\Delta\rvert<\gamma \ell(\Delta) , \varphi(x)\leq t\leq \varphi(x)+\gamma \ell(\Delta)\right\},
    \end{equation*}
    where $P_\Delta$ is the center of $\Delta$, $\ell(\Delta)=r$ and $\gamma\in(0,1)$ is as in Corollary \ref{cor.1.8}. 
    Then, $Lw=0$ in $T\left( 2\gamma \Delta \right)$, and $\lvert w\rvert \leq 2$, $w\vert_{2\gamma\Delta}\equiv 0$. We now apply the boundary H\"older continuity estimate for $w$ in $T\left( 2\gamma\Delta \right)$ (Estimate (1.9) in \cite{KKPT}, stated for non-negative solutions, but valid for the variable sign solutions in the form stated below) for $X\in T(\gamma\Delta)$
    \begin{equation}
        \lvert w(X)\rvert \leq C\left[ \frac{\text{dist}(X,\partial T(2\gamma\Delta))}{\ell(\Delta))} \right]^\beta\sup_{T(2\gamma\Delta)}\lvert w\rvert,
        \label{eqn.2.3}
    \end{equation}
    where $C,\beta$ depend only on the ellipticity, the dimension and the Lipschitz character on $\Omega$. Notice also that if $X\in T\left( \gamma\Delta \right)$, then $\text{dist}(X, \partial T\left( 2\gamma\Delta) \right)\simeq \text{ dist }(X, \partial D_\varphi)\simeq \text{dist}(X, 2\gamma\Delta)$, with comparability constants depending only on the Lipschitz character of $\Omega$.

    Consider now $\int_{\wt \Gamma_\gamma^{\gamma \ell(\Delta)}(P)}\lvert \nabla w(Y)\rvert^2 \lvert Y-P\rvert^{2-n}dY$, for $P\in E$ . We split $\wt \Gamma_\gamma^{\gamma \ell(\Delta)}(P)$ into the disjoint subregions $R_j$ with $j\ge 0$ such that 
    \begin{equation*}
    R_j=\left\{Y\in \wt \Gamma_\gamma^{\gamma \ell(\Delta)}(P): Y\in B(P, 2^{-j}\gamma \ell(\Delta))\backslash B(Y, 2^{-j-1}\gamma \ell(\Delta))\right\} 
        \end{equation*}
       Note that for $Y\in R_j$, $\lvert Y-P\rvert^{2-n}\simeq \left( 2^{-j}\gamma \ell(\Delta) \right)^{2-n}$.

     Combining Caccioppoli's estimate on each $R_j$ with the fact that $|w|\le 2$ and \eqref{eqn.2.3} we obtain
    \begin{eqnarray}
        \int_{\wt \Gamma_\gamma^{\gamma \ell(\Delta)}(P)}\lvert \nabla w(Y)\rvert^2 \lvert Y-P\rvert^{2-n}dY &\lesssim & \sum_{j=1}^\infty  \frac{1}{( 2^{-j}\gamma \ell(\Delta))^{n-2}}\int_{\wt \Gamma_\gamma^{\gamma \ell(\Delta)}(P)\cap R_j}\lvert \nabla w(Y)\rvert^2 dY \nonumber\\
&\lesssim & \sum_{j=1}^\infty  \frac{1}{( 2^{-j}\gamma \ell(\Delta))^{n}}\int_{\wt \Gamma_\gamma^{\gamma \ell(\Delta)}(P)\cap (R_{j-1}\cup R_j\cup R_{j+1})}\lvert w(Y)\rvert^2 dY \nonumber\\
&\lesssim & \sum_{j=1}^\infty  \frac{1}{( 2^{-j}\gamma \ell(\Delta))^{n}} (2^{-j}\gamma)^\beta {( 2^{-j+1}\gamma \ell(\Delta))^{n}} \le C.
        \label{eqn.2.4}
    \end{eqnarray}
    where $C$ depends only on the ellipticity, the dimension and the Lipschitz character of $\Omega$. We denote by $A_\Delta$ is the non-tangential point corresponding to $\Delta$, and by
    $\o^X$ and $\o_{D_\varphi}^X$ the elliptic measures for the domains $\Omega$ and $D_\varphi$ respectively.

     Using (1.13), (1.14) and Theorem 1.11 in \cite{KKPT} we obtain
    \begin{equation}
        \frac{\o^{A_\Delta}(E)}{\o_{D_\varphi}^{A_\Delta}(E)}\simeq 1,
        \label{eqn.2.5}
    \end{equation}
    with comparability constants that depend only on the ellipticity, the dimension and the Lipschitz character of $\Omega$. Moreover, (1.15) in \cite{KKPT} ensures that 
  \begin{equation}   
    \frac{\o(E)} {\o(\Delta)} \simeq \o^{A_\Delta}(E)
 \label{eqn.2.5A}
    \end{equation}   
     with comparability constants depending only on the ellipticity and the dimension. Combining \eqref{eqn.2.5}, \eqref{eqn.2.5A} and  Corollary \ref{cor.1.8}, we have that, if $\frac{\o(E)}{\o(\Delta)}<\wt \delta_0$, then $\o^{A_\Delta}_{D_\varphi }(E)\lesssim\wt \delta_0$. Hence 
     by \eqref{eqn.1.9} and \eqref{eqn.2.5A}
     $C_1\left[ \log \frac{\o(E)}{\o(\Delta)} \right]^{\frac12}\leq A_\gamma^{\gamma \ell(\Delta)}(v)(P)$, for all $P\in E$. Using \eqref{eqn.2.4} since 
     $A_\gamma^{\gamma \ell(\Delta)}(u)(P)\le A_\gamma^{\gamma \ell(\Delta)}(v)(P) +A_\gamma^{\gamma \ell(\Delta)}(w)(P)$,
     taking $\wt \delta_0$ possibly smaller, still depending only on the ellipticity, the dimension and the Lipschitz character of $\Omega$,
    \begin{equation*}
        C_1\left[ \log \frac{\o(\Delta)}{\o(E)} \right]^{\frac12}\leq A_\gamma^{\gamma \ell(\Delta)}(u)(P) \qquad \hbox{ for all  } P\in E,
    \end{equation*}
    as desired.
   \end{proof}

   \begin{thm}\label{thm.2.5}
      Let $\Omega$ be a bounded Lipschitz domain, $L$ an elliptic operator. Assume $0\in \Omega, \o=\o^0$ is the elliptic measure in $\Omega$ corresponding to $L$. Assume that for all Borel sets $H\ss \partial \Omega$, the solution to the Dirichlet problem
      \begin{equation*}
        \left\{
        \begin{aligned}
            Lu&= 0 \text{ in }\Omega\\
            u\vert_{\partial \Omega}&= \chi_H
        \end{aligned}
        \right.
    \end{equation*}
satisfies the following Carleson bound
    \begin{equation}
        \sup_{\Delta \ss \partial \Omega, \
        \rm{ diam } (\Delta)\leq \rm {diam }(\Omega)} \frac{1}{\sigma(\Delta)} \int_{T(\Delta)}\text{dist}(X,\partial \Omega)\lvert \nabla u(X)\rvert^2 dX\leq A.
        \label{eqn.2.6}
    \end{equation}
    Then $\o\in A_\infty(d\sigma)$, i.e., there exist $0<\alpha<1$, $0<\beta<1$ and $r_0>0$ such that for all surface balls $\Delta$ of diameter smaller than $r_0$, we have, for all Borel sets $E\ss \Delta$ that $\frac{\o(E)}{\o(\Delta)}<\alpha\implies \frac{\sigma(E)}{\sigma(\Delta)}<\beta$ with $\alpha, \beta$ depending only on ellipticity, dimension, Lipschitz character and $A$. Moreover, there exists $p_0>1$ such that the solution of the Dirichlet problem
       \begin{equation*}
        \left\{
        \begin{aligned}
            Lv&= 0 \text{ in }\Omega\\
            v\vert_{\partial \Omega}&= f\in C(\partial \Omega) 
        \end{aligned}
        \right.
    \end{equation*}
    verifies the estimate
    \begin{equation*}
        \int_{\partial \Omega}(v^*)^pd\sigma\leq C\int_{\partial \Omega}f^pd\sigma,
    \end{equation*}
    for $p_0\leq p<\infty$, with $C$ depending only on the ellipticity, the dimension, the Lipschitz character of $\Omega$, $A$ and $p$.
   \end{thm}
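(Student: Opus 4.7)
The plan is to mimic the strategy used to derive Theorem \ref{thm.1.10} and Corollary \ref{cor.1.13} from Proposition \ref{prop.1.1}, now using Proposition \ref{prop.2.1} as the key pointwise lower bound for the square function. Fix a surface ball $\Delta\subset\partial\Omega$ of diameter at most some $r_0\le R$ (to be chosen), and let $E\subset\Delta$ be Borel with $\omega(E)/\omega(\Delta)<\wt\delta_0$. Proposition \ref{prop.2.1} produces a Borel set $H\subset\Delta$ and the solution $u$ of the Dirichlet problem with data $\chi_H$ for which
\[
\wt C_1^{\,2}\,\log\frac{\omega(\Delta)}{\omega(E)}\le \bigl(A_\gamma^{\gamma\ell(\Delta)}(u)(P)\bigr)^2 \qquad\text{for all } P\in E.
\]
Integrating this inequality over $E$ with respect to surface measure $\sigma$ and applying Fubini to the truncated cone integral, the right hand side is, up to a constant depending only on $\gamma$, dimension, and the Lipschitz character of $\Omega$, bounded by
\[
\int_{T(C\Delta)}\mathrm{dist}(X,\partial\Omega)\,|\nabla u(X)|^2\,dX,
\]
where $C\Delta$ is a fixed enlargement of $\Delta$ (still of diameter $\le \mathrm{diam}(\Omega)$ once $r_0$ is small). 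The hypothesis \eqref{eqn.2.6} and Ahlfors regularity of $\partial\Omega$ then bound this by $C'A\,\sigma(\Delta)$, giving
\[
\frac{\sigma(E)}{\sigma(\Delta)}\le \frac{C\,A}{\log\bigl(\omega(\Delta)/\omega(E)\bigr)}.
\]

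From this quantitative inequality the local $A_\infty$ property follows immediately: for any $\beta\in(0,1)$, choosing $\alpha\in(0,\wt\delta_0)$ small enough so that $CA/\log(1/\alpha)<\beta$ forces $\sigma(E)/\sigma(\Delta)<\beta$ whenever $\omega(E)/\omega(\Delta)<\alpha$. To pass from surface balls of diameter at most $r_0$ to a global statement and to the $L^p$ estimate on the nontangential maximal function, I would invoke the standard machinery: the local $A_\infty$ condition together with the doubling of $\omega$ and the change-of-pole formula from \cite{KKPT} upgrade to a reverse H\"older inequality for the Poisson kernel at a fixed pole on each surface ball, hence to membership in $A_p$ for some $p_0>1$, uniformly across the finite cover of $\partial\Omega$ by balls of radius $\le r_0$. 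The passage from $A_p$ of elliptic measure to $\|v^*\|_{L^p(d\sigma)}\lesssim \|f\|_{L^p(d\sigma)}$ is the well known argument recorded in \cite{KenigCBMS}, \cite{HKMP}, \cite{Kenig-Shen}, exactly as invoked in the proof of Corollary \ref{cor.1.13}.

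The main technical point is the Fubini step: one must verify that for $P\in E\subset\Delta$ the truncated nontangential cone $\wt\Gamma_\gamma^{\gamma\ell(\Delta)}(P)$ lies inside a single Carleson tent $T(C\Delta)$ whose surface ball base has diameter comparable to $\ell(\Delta)$, with constants depending only on $\gamma$ and the Lipschitz character, and that the weight $|Y-P|^{2-n}$ integrates against $d\sigma(P)$ over $E$ to produce the factor $\mathrm{dist}(Y,\partial\Omega)$ needed to match \eqref{eqn.2.6}. This is a routine but slightly delicate computation involving the bi-Lipschitz flattening $\Phi$ used in \eqref{lip-dom} and Corollary \ref{cor.1.8}. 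A minor secondary obstacle is patching the local $A_\infty$ bound into a single global $A_\infty$ statement across different coordinate patches, which is handled by the comparability of elliptic measures with different poles and the finiteness of the cover.
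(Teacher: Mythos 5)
Your proposal follows the same route as the paper's proof: integrate the squared pointwise lower bound from Proposition \ref{prop.2.1} over $E$ against $\sigma$, apply Fubini to convert the truncated cone integral into a Carleson-region integral over a fixed enlargement of $\Delta$, invoke the hypothesis \eqref{eqn.2.6} to obtain $\sigma(E)/\sigma(\Delta)\lesssim A\bigl(\log(\omega(\Delta)/\omega(E))\bigr)^{-1}$, choose $\alpha$ small to conclude the local $A_\infty$ statement, and then cite the standard $A_\infty$ machinery for the $L^p$ nontangential maximal function estimate. The paper's proof is terser (it does not spell out the Fubini geometry or the patching across coordinate charts), but these points you flag are indeed the places where care is needed and they resolve exactly as you indicate; no genuinely different idea is involved.
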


   \begin{proof}
       Choose $\alpha<\wt \delta_0$, where $\wt\delta_0$ is as in Proposition \ref{prop.2.1}. Choose $r_0$ the radius of $\Delta$ small enough so small that $2\gamma r_0< R$, with $R$ as in \eqref{bdd-lip-dom1}. In this case $2\gamma\Delta$ can be regarded as a surface ball for some $D_\varphi$. Apply now Proposition \ref{prop.2.1} and integrate the square of \eqref{eqn.2.2} over $E$. By Fubini, we obtain
       \begin{eqnarray*}
           \wt C_1\left[\log  \frac{\o(\Delta)} {\o(E)}\right] \sigma(E)&\lesssim& \int_{T\left( (1+\gamma) \Delta\right)}\text{ dist }(X,\partial \Omega)\lvert \nabla u(X)\rvert^2 dX\\
           &\leq & A\sigma\left( (1+\gamma)\Delta \right)\lesssim A\sigma(\Delta),
       \end{eqnarray*}
       by assumption \eqref{eqn.2.6}. The conclusion holds choosing $\alpha$ so small that 
       $$\frac{C}{\wt C_1}\left[ \log  \frac{\o(\Delta)} {\o(E)} \right]^{-1}<1.
       $$
        The rest of the result follows from the theory of $A_\infty$ weights (\cite{CF}) and well known results (see \cite{KenigCBMS}).

   \end{proof}

   In a similar way we can prove:
   \begin{thm}\label{thm.2.7}
    Let $\Omega$ be a bounded Lipschitz domain, $L$ an elliptic operator. Assume $0\in \Omega, \o=\o^0$ is the elliptic measure in $\Omega$ corresponding to $L$. Assume that for all Borel sets $H\ss \partial \Omega$, the solution to the Dirichlet problem
      \begin{equation*}
        \left\{
        \begin{aligned}
            Lu&= 0 \text{ in }\Omega\\
            u\vert_{\partial \Omega}&= \chi_H
        \end{aligned}
        \right.
    \end{equation*}
  Suppose that 
         \begin{equation}
           \lVert A_\gamma(u)\rVert_{L^q(\partial \Omega)}\leq A\lVert u^*\rVert_{L^q(\partial \Omega)},
           \label{eqn.2.8}
       \end{equation}
       for some $p$, $1+\frac{1}{n-2}\leq q<\infty$ if $n \geq 3$ or $q_0\leq p<\infty$ if $n=2$, with $q_0$ depending on the ellipticity and the Lipschitz character of $\Omega$. 
        Here $\gamma$ is taken as in Proposition \ref{prop.2.1}.
   Then $\o\in A_\infty(d\sigma)$.
    Moreover there exists $p_0$ such that the solution of the Dirichlet problem
       \begin{equation*}
        \left\{
        \begin{aligned}
            Lv&= 0 \text{ in }\Omega\\
            v\vert_{\partial \Omega}&= f\in C(\partial \Omega) 
        \end{aligned}
        \right.
    \end{equation*}
satisfies
    \begin{equation*}
        \int_{\partial \Omega}(v^*)^pd\sigma\leq C\int_{\partial \Omega}f^p\, d\sigma,
    \end{equation*}
    for $p_0\le p<\infty$ with $C$ depending only on the ellipticity, the dimension, the Lipschitz character of $\Omega$, $A$ and $p$.
   \end{thm}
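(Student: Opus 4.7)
The plan is to follow the blueprint of Theorem \ref{thm.1.16} but in the bounded Lipschitz setting, using Proposition \ref{prop.2.1} in place of Proposition \ref{prop.1.1} and integrating against surface measure $d\sigma$ on $\partial\Omega$. First I would fix a threshold $\delta<\wt\delta_0$ (the constant from Proposition \ref{prop.2.1}) and choose $r_0$ small enough relative to $R$ and $\gamma$ that for any surface ball $\Delta$ of diameter at most $r_0$ the enlargement $3(1+\gamma)\Delta$ still sits inside a single coordinate chart where $\partial\Omega$ is a Lipschitz graph. For any Borel $E\subset \Delta$ with $\omega(E)/\omega(\Delta)<\delta$, Proposition \ref{prop.2.1} furnishes a Borel set $H\subset\Delta$ whose Dirichlet solution $u=u_{\chi_H}$ satisfies
$$\wt C_1^{\,2}\,\log\frac{\omega(\Delta)}{\omega(E)}\;\le\; A_\gamma^{\gamma\ell(\Delta)}(u)(P)^2 \qquad \text{for all } P\in E.$$

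The next step is to upgrade this lower bound from the truncated cone $\wt\Gamma_\gamma^{\gamma\ell(\Delta)}(P)$ to the full cone $\wt\Gamma_\gamma(P)$, exactly as in the proof of Theorem \ref{thm.1.16}. Writing the complementary tail as a dyadic sum over shells at scales $2^j\ell(\Delta)$ around $P$ and applying Caccioppoli on each shell, the control reduces to a decay estimate $|u(X)|\lesssim \bigl(\ell(\Delta)/|X-P_\Delta|\bigr)^{n-2+\alpha_0}$ for $X\in\Omega$ far from $\Delta$. In the upper half-space this is exactly Lemma 4.9 of \cite{HKMP}; its analog for bounded Lipschitz domains follows either by flattening $\Delta$ via the chart $\Phi$ used in Corollary \ref{cor.1.8} (since $u$ vanishes on the boundary outside $\Delta$, one cascades boundary H\"older continuity through dyadically growing balls) or by a direct Harnack-chain argument. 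The tail is thereby controlled by a constant depending only on ellipticity, dimension and the Lipschitz character, so shrinking $\delta$ one more time yields $A_\gamma(u)(P)^2\gtrsim \log(\omega(\Delta)/\omega(E))$ uniformly for $P\in E$.

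Raising to the power $q/2$ and integrating against $d\sigma$ over $E$, then applying the hypothesis \eqref{eqn.2.8}, yields
$$\Bigl[\log\tfrac{\omega(\Delta)}{\omega(E)}\Bigr]^{q/2}\sigma(E)\;\lesssim\; \int_{\partial\Omega}A_\gamma(u)^q\,d\sigma\;\le\; A^q\int_{\partial\Omega}(u^*)^q\,d\sigma.$$
I would split the right hand side as $\int_{3\Delta}$ (bounded by $\sigma(\Delta)$ because $0\le u\le 1$) plus $\int_{\partial\Omega\setminus 3\Delta}$, using the same decay of $u^*$ together with the assumption $q\ge 1+1/(n-2)$ (resp.\ $q>1/\alpha_0$ when $n=2$) to sum the resulting dyadic series and conclude the far piece is also $\lesssim A^q\sigma(\Delta)$. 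Rearranging gives $\sigma(E)/\sigma(\Delta)\lesssim [\log(\omega(\Delta)/\omega(E))]^{-q/2}$; picking $\delta$ sufficiently small then produces the local $A_\infty$ estimate, and the $L^p$ bound for continuous data follows from standard $A_\infty$ theory (\cite{CF}, \cite{KenigCBMS}), exactly as in Theorem \ref{thm.2.5}.

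The main obstacle is verifying the decay estimate $|u(X)|\lesssim (\ell(\Delta)/|X-P_\Delta|)^{n-2+\alpha_0}$ for $X\in\Omega$ with $|X-P_\Delta|\ge 3\ell(\Delta)$ in the bounded Lipschitz setting: the chart $\Phi$ only flattens a neighborhood of $\Delta$ up to scale $R$, so for $|X-P_\Delta|$ comparable to or larger than $R$ one cannot directly import the half-space argument and must instead combine the local flattening with the global bound $0\le u\le 1$ via a finite Harnack chain. Since we have already restricted to $\ell(\Delta)\le r_0\ll R$ and only the ratio $\ell(\Delta)/|X-P_\Delta|$ matters, this reduces to an accounting of constants depending on ellipticity, dimension, and the Lipschitz character of $\Omega$, completing the proof of Theorem \ref{thm.2.7}.
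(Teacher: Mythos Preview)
Your proposal is correct and matches the paper's approach: mimic the proof of Theorem~\ref{thm.1.16} with Proposition~\ref{prop.2.1} in place of Proposition~\ref{prop.1.1}, and supply the bounded-domain analog of Lemma~4.9 in \cite{HKMP}. The paper records this analog as Lemma~\ref{lma.2.9}, stated in two regimes (power decay $(\ell(\Delta)/|X-P_\Delta|)^{n-2+\beta}$ inside the chart $T(\tfrac12\Delta_0)$ and the constant bound $(\ell(\Delta)/R)^{n-2+\beta}$ on the rest of $\Omega$), which resolves precisely the obstacle you flag in your last paragraph.
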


   The proof of \ref{thm.2.7} is the same as the one of Theorem \ref{thm.1.16}, using Proposition \ref{prop.2.1} instead of Proposition \ref{prop.1.1}, and the following analog of Lemma 4.9 in \cite{HKMP}.
For the notation used in the following lemma we refer the reader to the beginning of this section.  

   \begin{lma}\label{lma.2.9}
       Let  $u$ be the solution to $L u=0$ in $\Omega$ a bounded Lipschitz domain, with boundary values $\chi_H$, $H$ a Borel set, contained in a surface ball $\Delta=B(P_\Delta, r)\cap\partial \Omega$ with $r<R/4$.  
 Then, there exist positive constants $C$ and $\beta$, depending only on dimension and ellipticity such that
      \begin{equation}
      u(X)\leq C\left[  \frac{\ell(\Delta)}{\lvert X-P_\Delta\rvert}\right]^{n-2+\beta} \hbox{ for }X\in T\left( \frac{1}{2}\Delta_0 \right)\bs T(2\Delta)
      \end{equation}
      and 
      \begin{equation}
      u(X)\leq C\left[ \frac{\ell(\Delta)}{R} \right]^{n-2+\beta}  \hbox{ for }X\in \Omega\bs T\left( \frac{1}{2}\Delta_0 \right).
      \end{equation}

   \end{lma}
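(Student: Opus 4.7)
The proof follows the template of Lemma 4.9 of \cite{HKMP}, invoked as \eqref{eqn.1.19} in Section~\ref{sec.1}, adapted to the bounded Lipschitz setting. It has two parts, one for each estimate.

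\emph{First bound.} Fix $X\in T(\tfrac12\Delta_0)\setminus T(2\Delta)$ and set $d=|X-P_\Delta|$, so that $2\ell(\Delta)\le d$. By the maximum principle $0\le u\le 1$ and $u(X)=\omega^X(H)\le\omega^X(\Delta)$. From the CFMS-type comparison that comes out of (1.13)--(1.15) in \cite{KKPT} together with the doubling property of $\omega$ one has
\begin{equation*}
\omega^X(\Delta)\le C\, G(X,A_\Delta)\,\ell(\Delta)^{n-2},
\end{equation*}
where $A_\Delta$ is the corkscrew point at height $\sim\ell(\Delta)$ above $P_\Delta$. The task then reduces to Hölder decay of $G(X,\cdot)$ at its pole $A_\Delta$. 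Fix $X$: the map $Z\mapsto G(X,Z)$ is harmonic for the adjoint operator $L^*$ in $\Omega\setminus\{X\}$ and extends continuously by zero on $\partial\Omega$, so the boundary Hölder estimate (1.9) in \cite{KKPT} applies to it. Every $Z'\in\Omega\cap B(P_\Delta,d/2)$ satisfies $|X-Z'|\ge d/2$, hence the standard pointwise Green's function bound yields $\sup_{Z'\in\Omega\cap B(P_\Delta,d/2)} G(X,Z')\le C d^{-(n-2)}$. Since $A_\Delta\in B(P_\Delta,d/2)$ with $\mathrm{dist}(A_\Delta,\partial\Omega)\simeq\ell(\Delta)$, the boundary Hölder estimate applied in this ball then gives
\begin{equation*}
G(X,A_\Delta)\le C\Bigl(\tfrac{\ell(\Delta)}{d}\Bigr)^{\!\beta}\, d^{-(n-2)},
\end{equation*}
and substituting back produces the first claimed bound.

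\emph{Second bound.} I apply the maximum principle to $u$ on the open set $\Omega\setminus\overline{T(\tfrac12\Delta_0)}$. Its boundary splits into the part of $\partial\Omega$ outside $\tfrac12\Delta_0$, where $u\equiv 0$ since $H\subset\Delta\subset\tfrac12\Delta_0$, and the interior piece $\Omega\cap\partial T(\tfrac12\Delta_0)$, every point of which has distance $\simeq R$ from $P_\Delta$ and is therefore controlled by the first bound specialized to $d\simeq R$, namely by $C(\ell(\Delta)/R)^{n-2+\beta}$. The maximum principle then propagates this bound to the interior of $\Omega\setminus T(\tfrac12\Delta_0)$.

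\emph{Main obstacle.} The delicate ingredient is the CFMS-type comparison $\omega^X(\Delta)\le CG(X,A_\Delta)\ell(\Delta)^{n-2}$ together with the pointwise Green's function upper bound in the non-symmetric bounded Lipschitz setting --- both standard, but with constants depending on the Lipschitz character of $\Omega$. Consequently the constants $C,\beta$ in Lemma~\ref{lma.2.9} should be read as depending on dimension, ellipticity, \emph{and} the Lipschitz character, and one must verify that the adjoint operator $L^*$ enjoys the same boundary Hölder regularity as $L$ with constants controlled in the same way (which it does, since $L^*$ is itself a divergence-form operator with the same ellipticity).
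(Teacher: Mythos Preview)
Your proposal is correct and follows exactly the approach the paper indicates: the paper omits the proof entirely, stating only that it ``is identical to the one of Lemma~4.9 in \cite{HKMP},'' and your sketch---$u(X)=\omega^X(H)\le\omega^X(\Delta)$, the CFMS comparison $\omega^X(\Delta)\lesssim G(X,A_\Delta)\,\ell(\Delta)^{n-2}$, boundary H\"older decay of the (adjoint) Green function for the first estimate, and then the maximum principle on $\Omega\setminus\overline{T(\tfrac12\Delta_0)}$ for the second---is precisely that argument. Your remark that the constants should in fact depend on the Lipschitz character (not merely dimension and ellipticity, as the statement literally says) is a valid observation.
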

   
   Note that by the maximum principle in this case
   $0\leq u\leq 1$. Moreover given the definition of $R$ (see \eqref{bdd-lip-dom1}) in this case $\Delta_0=B(P_\Delta, R)\cap \Omega$ can be seen as the area above a Lipschitz graph inside $B(P_\Delta, R)$. 
   The proof of Lemma \ref{lma.2.9} is identical to the one of Lemma 4.9 in \cite{HKMP} and it is therefore omitted.

\end{document}